\tikzset{
	MyPersp/.style={scale=1.8,x={(-2cm,-0.5cm)},y={(2cm,-0.5cm)},
		z={(0,1cm)}},
	% MyPersp/.style={scale=1.5,x={(0cm,0cm)},y={(1cm,0cm)},
	% z={(0cm,1cm)}}, % uncomment the two lines to get a lateral view
	MyPoints/.style={fill=white,draw=black,thick}
}
\numberwithin{equation}{section} % Number equations within sections (i.e. 1.1,
\numberwithin{figure}{section} % Number figures within sections (i.e. 1.1, 1.2,
\numberwithin{table}{section} % Number tables within sections (i.e. 1.1, 1.2,
\DeclareMathOperator{\scn}{sc}
\DeclareMathOperator\im{Im}
\title[On a quadratic form associated with  a surface automorphism]{On a quadratic form associated with a surface automorphism and its applications to Singularity Theory} % Thick bottom
\author{L. Alan\'is-L\'opez, E. Artal, C. Bonatti, X. G\'omez-Mont, M. Gonz\'alez Villa \& P. 
	Portilla} % Name of the authors
\thanks{EA is partially supported by 
Grant PID2020-114750GB-C31 funded by MCIN/AEI/10.13039/501100011033
and 
Departamento de Ciencia, Universidad y Sociedad del Conocimiento of the Gobierno de Arag{\'o}n
(E22\_20R: ``{\'A}lgebra y Geometr{\'i}a''); MGV and PPC are partially supported by 
Grant PID2020-114750GB-C33 funded by MCIN/AEI/10.13039/501100011033; CB is partially supported by the project ANR-19-CE40-0007; and XGM and PPC is partially supported by  CONACYT grant 286447.}
\date{\normalsize\today} % Today's date or a custom date
\address{Departamento de Matem\'aticas, Universidad Aut\'onoma de Nuevo Le\'on, Av. Universidad s/n. Ciudad Universitaria San Nicol\'as de los Garza, Nuevo Le\'on, C.P. 66451, M\'exico}
\email{lilia.alanislpz@uanl.edu.mx}
\address{Departamento de Matem\'aticas-IUMA, Universidad de Zaragoza, c. Pedro Cerbuna 12, 50009 Zarag\-oza, Spain}
\email{artal@unizar.es}
\address{CNRS \& Institut de Math\'ematiques de Bourgogne (IMB, UMR CNRS 5584), Universit\'e Bourgogne, 9 av. Alain Savary, 21000 Dijon, France.}
\email{bonatti@u-bourgogne.fr}
\address{Centro de investigaci\'on en 
Matem\'aticas\\
Apartado Postal
402 \\
36000 Guanajuato, GTO. México}
\email{gmont@cimat.mx, manuel.gonzalez@cimat.mx, pablo.portilla@cimat.mx}
\newtheorem{thmA}{Theorem}
\newtheorem{thm}[equation]{Theorem}
\Crefname{thm}{Theorem}{thm}
\newtheorem{lemma}[equation]{Lemma}
\newtheorem{prop}[equation]{Proposition}
\Crefname{prop}{Proposition}{thm}
\newtheorem{cor}[equation]{Corollary}
\theoremstyle{definition}
\newtheorem{example}[equation]{Example}
\newtheorem{definition}[equation]{Definition}
\Crefname{notation}{Notation}{notation}
\theoremstyle{remark}
\newtheorem{remark}[equation]{Remark}
\DeclareMathOperator{\lcm}{lcm} %least common multiple
\DeclareMathOperator\res{res}
\newcommand{\RR}{\mathbb{R}}
\newcommand{\QQ}{\mathbb{Q}}
\newcommand{\CC}{\mathbb{C}}
\newcommand{\C}{\mathbb{C}}
\newcommand{\NN}{\mathbb{N}}
\newcommand{\ZZ}{\mathbb{Z}}
\newcommand{\G}{\Gamma}
\newcommand{\MS}{\mathbb{S}}
\newcommand{\gss}{\Gamma_{\!\textrm{ss}}}
\newcommand{\Vss}{\mathcal{V}_{\textrm{ss}}}
\newcommand{\id}{\mathop{\mathrm{id}}\nolimits}
\newcommand{\calA}{\mathcal{A}}
\newcommand{\calC}{\mathcal{C}}
\newcommand{\calD}{\mathcal{D}}
\newcommand{\ii}{(ii)}
\renewcommand{\epsilon}{\varepsilon}
\newcounter{dummy}
\newcommand{\modgr}{\mathrm{Mod}_{g,r}}
\newcommand{\calV}{\mathcal{V}}
\newcommand{\calE}{\mathcal{E}}
\newcommand{\calT}{\mathcal{T}}
\begin{document}
	\begin{abstract}We study the nilpotent part $N'$ of a pseudo-periodic automorphism $h$ of a real oriented surface with boundary $\Sigma$. We associate a quadratic form $Q$ defined on the first  homology group (relative to the boundary) of the surface $\Sigma$. Using the twist formula and techniques from mapping class group theory, we prove that the  form	$\tilde{Q}$ obtained after killing ${\ker N}$ is positive definite if all the screw numbers associated with certain orbits of annuli are positive. We also prove that the restriction of $\tilde Q$ to the absolute homology group of $\Sigma$ is even whenever the  quotient of the Nielsen-Thurston graph under the action of the automorphism is a tree. The case of monodromy automorphisms of Milnor fibers $\Sigma=F$ of germs of curves on normal surface singularities is discussed in detail, and the aforementioned results are specialized to such  situation. Moreover,  the form $\tilde{Q}$ is computable  in terms of the dual resolution  or  semistable reduction graph, as illustrated with several examples. Numerical invariants associated with $\tilde{Q}$  are able to distinguish plane curve singularities with different topological types but same spectral pairs. Finally, we discuss a generic linear  germ defined on a superisolated surface. {In this case the plumbing graph is not a tree and the restriction of $\tilde Q$ to the absolute monodromy of $\Sigma=F$ is not even}.%with non smooth ambient space. {\color{blue} Si lo ponemos en el abstract yo diría más bien que que el grafo es un árbol, no es ser singular lo que lo define.}
	\end{abstract}

	\maketitle

	\section*{Introduction}

We study the nilpotent part of pseudo-periodic automorphisms of real oriented surfaces with boundary. The monodromy of families of algebraic curves and the geometric monodromy of hypersurfaces on germs of normal surface singularities are examples of such automorphims. Our motivation comes indeed from the latter case.  Note also that the study of automorphisms of surfaces has appeared on several recent works on arithmetic and tropical geometry, see for instance \cite{Li2020surface} and \cite{corey2020ceresa}.

We associate a quadratic form with a pseudo-periodic automorphism $h$ of a real oriented surface $\Sigma$ with boundary, i.e., $\partial \Sigma \ne \emptyset$.	Let $e \in \mathbb{N}$ be the least common multiple of the orders of $h$ restricted to each periodic piece in its Nielsen-Thurston decomposition. We consider the $\ZZ$-linear %nilpotent 
operators
\[
N: H_1(\Sigma,\partial \Sigma; \ZZ) \longrightarrow H_1(\Sigma; \ZZ)\text{ and }
N': H_1(\Sigma; \ZZ) \longrightarrow H_1(\Sigma; \ZZ) ,
% \hskip 1cm \hbox{ given by } \hskip 1cm 
% 	N[\gamma] =   {[h^e(\gamma) -\gamma]},
\]
given by $[\gamma] \mapsto {[h^e(\gamma) -\gamma]}$,
% 	and 
% \[
% N': H_1(\Sigma; \ZZ) \longrightarrow H_1(\Sigma; \ZZ),\hskip 1cm \hbox{ given by } \hskip 1cm 
% 	N'[\gamma] =  {[h^e(\gamma) -\gamma]},
% \]
and we associate with them the symmetric bilinear form
\[
Q:H_1(\Sigma,\partial \Sigma; \ZZ) \times H_1(\Sigma, \partial \Sigma; \ZZ) \longrightarrow \ZZ,\hskip 5mm \hbox{ defined  by } \hskip 5mm Q(\alpha , \beta):=
	\left\langle N\alpha , \beta \right\rangle,
\]
and also 
\[
\tilde{Q}:\frac{H_1(\Sigma, \partial \Sigma; \ZZ)}{\ker N}   \times 
	\frac{H_1(\Sigma, \partial \Sigma; \ZZ)}{\ker N} \longrightarrow {\ZZ},
\]
	where 
		$\left\langle\ , \ \right\rangle:H_1(\Sigma;  \ZZ) \times H_1(\Sigma, \partial \Sigma; \ZZ) \longrightarrow \ZZ$
	denotes the usual intersection product. In a similar way we can define a symmetric bilinear form $Q'$ on $H_1(\Sigma; \ZZ)$.

The $\ZZ$-linear operator $N$ 
and the quadratic form $Q$ are designed to recover information of the unipotent part of $h_\ast$. 
The linear operator $N$ is defined as a variation operator for $h_\ast^e$ and therefore the contributions 
to $H_1(\Sigma, \partial \Sigma, \ZZ)$ coming from the periodic pieces of Nielsen-Thurston 
decomposition of $h$ are killed by $N$. The image of $N$ on the other hand can be represented by a collection of disjoint simple closed oriented curves on $\Sigma$ with the property that each of those simple closed curves do not intersect the homology of the periodic pieces but goes through at least one annular neighborhood of a separating curve in the Nielsen-Thurston decomposition of $h$. 
Note that, at least in the singularity theory setting and according to the formula for the characteristic polynomial 
of $h_\ast$ in $H_1(\Sigma, \ZZ) / \ker N'$ (see \cref{subsec:alg_inv_mon}), the unipotent part of $h$ comes from the separating nodes, and paths joining pairs of them. 
These 
relate to the aforementioned annuli.
The quadratic form $Q$ encodes the intersection of the elements of $H_1(\Sigma, \partial \Sigma, \ZZ)$ with their images under~$N$.

The basic techniques for the study of automorphisms of real oriented surfaces are the Nielsen-Thurston classification and the mapping class group. The main references are the book of Matsumoto and Montesinos \cite{MatMont} and the book of Farb and Margalit \cite{Farb}. We use these techniques to give explicit formulas of $N$ and $\tilde Q$, and sufficient conditions for $\tilde Q$ being positive definite and being even. Definiteness of a quadratic form is an important property related to a notion of convexity and, in the algebraic setting, to the Hodge index theorem. 
	
	Let $\mathcal{C}$ be a collection of pairwise disjoint simple closed curves on $\Sigma$ determining the canonical form or decomposition of the pseudo periodic automorphism $h$, see \Cref{thm:can_form}. 	
	Let 
$v, w \in H_1(\Sigma, \partial \Sigma; \ZZ)$ and let $\gamma_v, \gamma_w$ oriented curves on $\Sigma$ representing these classes. 
For $C\in\mathcal{C}$,
% Let $C_i$ be the collection of curves in $\mathcal{C}$ intersecting both $\gamma_v$ and $\gamma_w$,  
let $T_C$ be a tubular neighborhood  of the curve $C$, and % and an order of the components $\partial_1 T_i$ and $\partial_2 T_i$ if thier boundaries. 
let $s_C=\scn (h^e, T_C)$ be the screw number associated to the annulus $T_C$, see \cref{def:sc}, and \cref{lem:twist_number}. 
The  main results about $N$ and $\tilde{Q}$ are \Cref{thm:def_pos}, \Cref{cor:even} and formulas \eqref{eq:formN} and \eqref{eq:formtQ}. These statements can be summarized as follows.

\begin{thmA}The %nilpotent 
operator $N$ and the quadratic form $\tilde Q$ are given by the formulas	
\[
N(v)= \sum_{C\in\mathcal{C}}  s_C \langle [C], v \rangle [C] \quad \hbox{ and } \quad  \tilde{Q}(v,w)= \sum_{C\in\mathcal{C}} s_C \langle [C], v \rangle \langle [C],w \rangle.
\]
The bilinear form $\tilde Q$ is positive definite if all the screw numbers associated to non-nullhomotopic curves $C$ are positive and $Q'$
% its restriction 
% \textcolor{red}{$\tilde{Q}$} to $H_1(\Sigma; \ZZ) / \ker N'  \times H_1(\Sigma; \ZZ) / \ker N'$ 
is  even  whenever the quotient of the Nielsen-Thurston graph $G(h)$ under the action induced by $h$  is a tree. 
\end{thmA}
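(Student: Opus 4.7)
The plan is to work in the canonical form of $h$ given by \Cref{thm:can_form} and derive all three claims from the Dehn twist formula in homology, together with a mod~$2$ intersection argument for evenness. First, I would fix the Nielsen--Thurston decomposition, in which $h^e$ restricts to the identity on each periodic piece and to $T_C^{s_C}$ on each annular neighborhood $T_C$ of a curve $C\in\mathcal{C}$. Given $v\in H_1(\Sigma,\partial\Sigma;\ZZ)$ represented by a relative cycle $\gamma_v$ transverse to $\mathcal{C}$, the difference $h^e(\gamma_v)-\gamma_v$ is supported in $\bigsqcup_C T_C$, since $h^e$ is the identity on each periodic piece (and in particular on $\partial\Sigma$, so the difference is an absolute cycle). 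On each arc $\gamma_v\cap T_C$, the standard Dehn twist formula in homology (cf.\ Farb--Margalit) produces a contribution of $s_C\,[C]$ per signed intersection point of $\gamma_v$ with $C$; summing over intersection points and then over $C$ yields the formula for $N(v)$, and pairing with $w$ by bilinearity of $\langle\cdot,\cdot\rangle$ gives the formula for $\tilde Q(v,w)$. The form $\tilde Q$ descends to $H_1(\Sigma,\partial\Sigma;\ZZ)/\ker N$ since each summand of $N(v)$ vanishes precisely when $v$ pairs trivially with the corresponding $[C]$.

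For positive definiteness, the formula gives $\tilde Q(v,v)=\sum_{C}s_C\langle [C],v\rangle^2$. Nullhomotopic curves contribute nothing since $[C]=0$; under the hypothesis that $s_C>0$ for every non-nullhomotopic $C\in\mathcal{C}$, every remaining summand is nonnegative and vanishing forces $\langle [C],v\rangle=0$ for every such $C$, which is exactly the condition $v\in\ker N$. Thus $\tilde Q$ is positive definite on the quotient.

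The evenness claim is the most delicate part. I would partition $\mathcal{C}$ into $h$-orbits; each orbit $O$ corresponds to a single edge $\bar e$ of the quotient graph $G(h)/\langle h\rangle$. Under the tree hypothesis, $\bar e$ is a bridge, so its removal splits the quotient tree into two subtrees $\bar T_1,\bar T_2$. Pulling these back to $G(h)$ produces an $h$-invariant partition of the vertex set and hence a decomposition $\Sigma=\Sigma_1\cup\Sigma_2$ with $\Sigma_1\cap\Sigma_2=\bigcup_{C\in O}C$. Thus $\Sigma_1$ is a compact subsurface of $\Sigma$ whose interior boundary equals $O$, and as a mod~$2$ chain its boundary yields
\[
[O]\;:=\;\sum_{C\in O}[C]\;\equiv\;[\partial\Sigma\cap\Sigma_1]\pmod 2
\]
in $H_1(\Sigma;\ZZ/2)$. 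The right-hand side lies in the image of $H_1(\partial\Sigma;\ZZ/2)\to H_1(\Sigma;\ZZ/2)$, which is contained in the radical of the mod~$2$ intersection form (any boundary class can be isotoped away from an interior cycle). Hence $\langle [O],v\rangle\equiv 0\pmod 2$ for every $v\in H_1(\Sigma;\ZZ)$. Combining $x^2\equiv x\pmod 2$ with grouping by orbits,
\[
Q'(v,v)\;=\;\sum_{C\in\mathcal{C}}s_C\langle [C],v\rangle^2\;\equiv\;\sum_O s_O\,\langle [O],v\rangle\;\equiv\;0\pmod 2,
\]
so $Q'$ is even.

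The main obstacle will be the bridge-to-subsurface step: one must verify carefully that the preimage in $G(h)$ of each subtree $\bar T_i$ assembles into a genuine embedded subsurface $\Sigma_i$ of $\Sigma$ whose interior boundary contains exactly the curves of $O$ and no other curves of $\mathcal{C}$ (this uses crucially that $O$ is the full preimage of $\bar e$). Secondary technicalities are a careful treatment of nullhomotopic or boundary-parallel components of $\mathcal{C}$ in the derivation of the formula for $N$, and the possible orientation-reversal of curves under $h$, though neither affects the $\ZZ/2$-reduction used above.
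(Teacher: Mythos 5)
Your proposal is correct, and for the formulas and for positive definiteness it follows essentially the paper's route: the paper derives both from the Dehn twist formula of \cref{lem:twist_number} with the bookkeeping $\langle[C_i],v\rangle=p_i^+-p_i^-$, and its proof of \cref{thm:def_pos} is exactly your computation $\tilde Q(v,v)=\sum_C s_C\langle[C],v\rangle^2\ge 0$ followed by the observation that vanishing forces $v\in\ker N$. The only genuine divergence is the evenness step. The paper's proof of \cref{cor:even} is a one-line claim that, when the quotient graph is a tree, every embedded closed curve crosses each orbit of annuli an even number of times (a closed loop in a tree traverses each edge evenly). You instead establish the same underlying parity fact $\sum_{C\in O}\langle[C],v\rangle\equiv 0\pmod 2$ homologically: each orbit $O$ is the full preimage of a bridge of the quotient tree, hence cobounds a subsurface together with part of $\partial\Sigma$, so its mod-$2$ class pairs trivially with absolute classes. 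Both arguments then conclude via $x^2\equiv x\pmod 2$ and the constancy of the screw number along an orbit (which the paper records in \cref{rem:screw_number} and which your grouping $\sum_O s_O\langle[O],v\rangle$ uses). Your version is more detailed than the paper's and correctly isolates the point needing care; the subsurface-assembly step you flag does go through, precisely because the tree hypothesis rules out edges of $G(h)$ joining two vertices of the same $h$-orbit, so every edge over the bridge really crosses between the two pulled-back pieces.
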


We can interpret this theorem in the following way. The space $H_1(\Sigma, \partial \Sigma; \ZZ) / \ker N$ can be identified with the homology
in degree~$1$ of the Nielsen-Thurston graph of the monodromy automorphism relative to the vertices coming from the boundary.
The screw numbers of $h^e$ define a diagonal positive definite form in the group of $1$-chains of the Nielsen-Thurston graph.
The bilinear form $\tilde Q$ is identified with the restriction of this form to the above relative homology.

The role of quadratic forms in Singularity theory has been surveyed by Wall \cite{MR1803373}, in the normal surface case, and Hertling \cite{Hertling05}. If $h$ is the geometric monodromy of a germ of hypersurface on a normal surface singularity and $\Sigma=F$ is the corresponding Milnor fiber, then the hypothesis on the screw numbers in the above theorem is satisfied, see \cref{thm:posdefsi}. The reasons behind the positivity of the screw numbers $s_C$ are the twist formula  \Cref{tf}, and the fact that the resolution data $m_i$ are always positive, see~\cref{subsec:twist-res}. Alternatively and due to a remark of Mumford  \cite[II, (b), (ii)]{Mum}, the positivity of the resolution data $m_i$ can be understood as a consequence of the negative definiteness of the intersection matrix of a resolution, see \cref{rem:neg_pos}. The hypothesis about the shape of the quotient of the Nielsen-Thurston graph $G(h)$ under the action induced by $h$ in the above theorem is satisfied for plane curve singularities, see \cref{cor:even2}. 

    Furthermore, in the case of geometric monodromies of hypersurfaces on germs of normal surfaces, there is an equivalence between  the dual graph $\gss $ of the semistable reduction, and the Nielsen-Thurston graph of $h$, see \Cref{lem:homtopy_equivalence_graphs}. In particular, there is a  map associating to a closed path $\alpha$ in $F$ its	image in the dual graph $\gss $ of the semistable reduction. This map induces isomorphisms
\[
\frac{H_1(F; \ZZ)}{W_1} \longrightarrow H_1(\gss ; \ZZ) 
	\quad \hbox{ and } \quad \frac{H_1(F, \partial F; \ZZ)}{\ker N} \longrightarrow H_1(\gss , \calD; \ZZ) \text{, see page~\pageref{calD} for }W_1\text{ and }\calD,
\]
and allows us to perform very explicit computations of the form $\tilde{Q}$ in particular examples, see \Cref{sec:ex}. In particular, we show that numerical invariants associated to $\tilde{Q}$  are able to distinguish classic examples of pairs of plane curve singularities with different topological type but same spectral pairs, due to Schrauwen, Steenbrink, and Stevens, see~\cref{ex:SSS}.
In \cite{DBM2} Du Bois and Michel gave two infinite families of reducible plane singularities (all members of both families consist of two branches)
which are not topologically equivalent but have the same Seifert form. The members of both families
have asymptotically big Milnor numbers.  Therefore their Seifert forms become asymptotically complicated too.  We  compute in~\cref{ex:DBM} the forms $\tilde{Q}$ associated with both families. It is worth mentioning that for those families $\tilde Q$ is always defined on an abelian group of rank~$4$.  Our computation show that for pairs of members of these families of singularities (with the same Seifert form) the forms $\tilde Q$ are equivalent. This is not surprising,
as pointed out by a referee (to whom we are strongly grateful), since 
$\tilde{Q}$ depends on the Seifert form, see \cite[\S10]{Wall} or~\cite[\S2.3]{MR2919697}. Nevertheless, note that 
while $\tilde{Q}$ is weaker than the Seifert form it is also computationally much simpler. 
The examples of Schrauwen, Steenbrink, and Stevens, which
have equal Seifert forms over $\RR$, have no equivalent
forms~$\tilde{Q}$, hence the real Seifert form does not determined
our bilinear form.
For the examples of Du Bois and Michel, we prove the equality in the same way
as they did, finding suitable bases where the matrices coincide.
Supplementary details (and possible verification) on the examples are provided in the link 
%\href{https://github.com/enriqueartal/QuadraticFormSingularity}{https://github.com/enriqueartal/QuadraticFormSingularity}
\url{https://github.com/enriqueartal/QuadraticFormSingularity} which can be executed using \texttt{Binder}.

Finally, a few words linking the results presented here with a future project. The  %nilpotent 
operator $N$ and the quadratic form $\tilde Q$ studied in this paper aim to describe the topological  part of the multiplication by 
a germ  of holomorphic map $f:(\CC^{n+1},0)\to(\CC,0)$  and of the residue pairing
%bilinear form $\res_{f,0}(\{f\} \bullet, \bullet)$ 
defined on the Jacobian module of~$f$. Notice that the multiplication by $f$ map and the residue pairing 
%bilinear form $\res_{f,0}(\{f\} \bullet, \bullet)$ 
encode analytic information in contrast to $N$ and $\tilde Q$ which encode just topological information.

The multiplication by $f$ map on the jacobian module $\Omega_f$ is given by $[gdz] \mapsto [(fg)dz]$. It is a nilpotent map with index $\leq n+1$, and it is trivial if and only if $f$ is right-equivalent to a quasihomogeneous polynomial.   Varchenko established that the nilpotent operator of the homological monodromy, and the map $Gr_\mathcal{V} \{f\}$, the graded multiplication by $f$ map with respect to the Kashiwara-Malgrange filtration $\mathcal{V}$, have the same Jordan block structure \cite{Varch81}.

%an ongoing research about the properties of the bilinear form $\langle\langle f*,*\rangle\rangle$ defined on the Jacobian algebra by multiplication map  $\{f\}: \Omega^f \rightarrow \Omega^f$ and applying Grothendieck duality. We explain the analytical counterpart part of this research in the last section of the paper.

	%Let us now introduce the other  bilinear form. 
	
%	Denote by $\Omega^{j}_{\C^{n+1},0}$ the space of germs of holomorphic $j$-forms and define the the Jacobian module $\Omega_f$ as the quotient $\Omega^{n+1}_{\C^{n+1},0} / df \wedge \Omega^{n}_{\C^{n+1},0}$.
	
%	 Fixing holomorphic coordinates $(z_0, \dots, z_n)$ in $(\C^{n+1},0)$, there is an isomorphism of $\C$-vector spaces between the Jacobian module $\Omega^f$ and the Milnor algebra $A_f$, defined as $\mathcal{O}_{}/J_f$ where $J_f=(\partial f / \partial z_a, \dots, \partial f / \partial z_n)$ is the Jacobian ideal generated by the partial derivatives $\partial f / \partial z_i$ with $i \in \{0, 1, \dots, n\}$.    
	
Using the Grothendieck local duality theorem, one can define a  nondegenerate symmetric {\em residue pairing} $\res_{f,0} : \Omega_f \times \Omega_f \rightarrow \C$ as
\[
\res_{f,0}([g_1 dz],[g_2dz]) = \left(\frac{1}{2 \pi i}\right)^{n+1} \int_{\Gamma_\epsilon} \left ( \frac{g_1(z) g_2(z)}{ \prod_{j=0}^n f_j} \right ),
\]
where $\Gamma_\epsilon$ is a ($n+1$)-real vanishing cycle determined by the partial derivatives $\partial f / \partial z_j$,  %Moreover, one can consider the {\em multiplication with $f$} map $\{f\}: \Omega^f \rightarrow \Omega^f$ given by $[gdz] \mapsto [(fg)dz]$. It is a nilpotent map with index $\leq n+1$. It is worthwhile to notice that the multiplication by $f$ map is trivial if and only if $f$ is right-equivalent to a quasihomogeneous polynomial.  Varchenko established that the nilpotent operator $N_u$, and the map $Gr_\mathcal{V} \{f\}$, the graded multiplication by $f$ map with respect to the Kashiwara-Malgrange filtration $\mathcal{V}$, have the same Jordan block structure \cite{Varch81}. See also \cite{ScheSteen}. Finally, let us consider the  higher bilinear form
and define the bilinear form	$\res_{f,0}(\{f\} \bullet, \bullet)$, which degenerates on $\ker \{f\}$.
	
It is worthwhile to notice that, motivated by previous results relating the signatures  of the residue pairing $\res_{f,0}(\{f\} \bullet, \bullet)$ to indices of vector fields \cite{GGMM}, the fourth named author initiated a program to analyze additive expansions of the bilinear forms $\res_{f,0}(\{f\}^l \bullet, \bullet)$ and to compare them to some topological bilinear form introduced by Hertling \cite{Hertling99, Hertling02},
see for instance the results from~\cite{Dela-Rosa}.

\medskip

%\noindent {\bf Acknowledgements.} We thank two referees for very valuable remarks  about a previous version of this paper.

%\medskip

	\section{Nielsen-Thurston theory}
	
	Let $\Sigma$ be a real oriented surface with $\partial \Sigma \ne \emptyset$. Assume  that $\Sigma$ has genus $g$ and $r$ boundary components. Let $\modgr (\Sigma, \partial \Sigma)$ denote the mapping class group of surface diffeomorphisms of the surface of $\Sigma$ that restrict to the identity on the boundary components and up to isotopy fixing the boundary.  
	
	\begin{example}  Let $\mathcal{D} : \mathbb{S}^1 \times [-\frac{1}{2},\frac{1}{2}] \rightarrow \mathbb{S}^1 \times [-\frac{1}{2},\frac{1}{2}]$ be the homeomorphism defined by $\mathcal{D}(x,t)=(x+t,t)$, where $\mathbb{S}^1$ is identified with $\mathbb{R} / \mathbb{Z}$.  Let $\mathcal{A}$ be an annulus. A homeomorphism $h: \mathcal{A} \rightarrow \mathcal{A}$ is called a \emph{right-handed Dehn twist} if there exists a parametrization $\eta : \mathbb{S}^1 \times [-\frac{1}{2},\frac{1}{2}] \rightarrow \mathcal{A}$ such that $h = \eta \circ \mathcal{D} \circ \eta^{-1}$.  The mapping class group $\mathrm{Mod}_{0,2} (\mathcal{A}, \partial \mathcal{A})$ is isomorphic to $\mathbb{Z}$, and it is generated by a right-handed Dehn twist.
	\end{example}
	
	The Nielsen-Thurston classification of mapping classes \cite[Theorem 13.2, see also the statement in page $11$]{Farb} says that for each mapping class one of the following exclusive statements is satisfied.	
	\begin{enumerate}
		\item $h$ is periodic, i.e., there exists $n \in \NN$ such that $h^n=  \id \in  \modgr (\Sigma, \partial \Sigma)$.
		\item $h$ is pseudo-Anosov. (The appropiate definition of this notion takes some time and it won't be used in the present work. We refer the interested reader to \cite{Farb} for more on this topic.)
		\item $h$ is reducible, i.e., there exists a representative $\phi$ of $h$ and a finite union of disjoint simple closed curves that is invariant by $\phi$.
	\end{enumerate}
	
	 It follows that one can cut up the surface $\Sigma$ along a collection of invariant curves into (maybe disconnected) surfaces such that the restriction of an appropriate representative of $h$ to each of these pieces is either periodic or pseudo-Anosov. When only periodic pieces appear in this decomposition, we say that $h$ is a {\em pseudo periodic mapping class}. In this work we only deal with this type of mapping classes.
	 
	 According to Nielsen-Thurston theory each pseudo periodic mapping class has a nice representative that we call {\em canonical form}. This homeomorphism is defined by the following theorem.
	
	\begin{thm}[Canonical form, {\cite[Corollary 13.3]{Farb}}] \label{thm:can_form}
		Let $h \in \modgr (\Sigma, \partial \Sigma)$ be pseudo periodic. Then there exists a collection~$\calC$ of pairwise disjoint simple closed curves on $\Sigma$, including curves parallel to all boundary components, a collection $\calT$ of pairwise disjoint tubular neighbourhoods $\calT_i$ of each $C_i$ in $\calC$, and a representative $\phi: \Sigma \to \Sigma$ of $h$ in $\modgr(\Sigma, \partial \Sigma)$ such that 
		
			\begin{enumerate}[label=\rm(\roman{enumi})]
			\item \label{ref:i}The multicurve $\calC$ and the multiannulus $\calT$ are invariant by $\phi$, that is, $\phi(\calC) = \calC$, and $\phi(\calT)=\calT$.
			\item \label{ref:ii}The automorphism $\phi$ restricted to suitable unions of components of the closure of $\Sigma \setminus \calT$ is periodic. 
			\item \label{ref:iii}If $n$ is a common multiple of the periods of the components of $\Sigma \setminus \calT$, then $\phi^n$ is the composition of non-trivial (and non necessarily positive) powers of right-handed Dehn-twist along all the curves in $\calC$.
		\end{enumerate}
		Denote by $\calC^+$ the subcollection of curves of $\calC$ that are not parallel to the boundary, which are called \emph{separating curves}. We will always assume that  $\calC^+$ is minimal. 
	\end{thm}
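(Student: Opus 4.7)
The plan is to derive this canonical form from the Nielsen-Thurston trichotomy, combined with an Alexander-type rigidity argument on annular collars. First, I would invoke the existence of a canonical (Birman-Lubotzky-McCarthy) reduction system for a reducible mapping class: a multicurve, unique up to isotopy and minimal with the property of being isotopy-invariant under some representative of $h$. Enlarging it by one boundary-parallel curve for each boundary component of $\Sigma$ gives the candidate $\calC$. Choosing pairwise disjoint open annular neighbourhoods $\calT$ of its components and isotoping $h$ inside its class to a representative $\phi$ that permutes both $\calC$ and $\calT$ yields part \ref{ref:i}.

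For part \ref{ref:ii}, I would apply the Nielsen-Thurston trichotomy to the restriction of $\phi$ to each $\phi$-orbit of connected components of $\Sigma \setminus \calT$. On every such orbit the restriction is either periodic or pseudo-Anosov; the hypothesis that $h$ is pseudo periodic rules out the pseudo-Anosov alternative, giving \ref{ref:ii}. Letting $n$ be a common multiple of the resulting periods, $\phi^n$ becomes isotopic, relative to the boundary, to a homeomorphism supported inside the disjoint annuli $\calT$.

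For part \ref{ref:iii}, once $\phi^n$ is supported in $\calT$, the identification $\mathrm{Mod}_{0,2}(\calA,\partial \calA) \cong \ZZ$ generated by the right-handed Dehn twist (recalled in the example preceding the statement) forces $\phi^n$ to be a product of integer powers of right-handed Dehn twists along the curves of $\calC$, one factor per core curve. Non-triviality of each such power on a curve of $\calC^+$ is then forced by minimality: if the exponent on some $C \in \calC^+$ vanished, a small isotopy would produce a representative preserving $\calC \setminus \{C\}$ setwise with pieces still periodic, contradicting the minimality of $\calC^+$.

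The main obstacle is the rigidity step that realises $h$ by a representative permuting not merely the multicurve $\calC$ but also the chosen tubular neighbourhoods $\calT$ setwise, together with the simultaneous canonical and minimal choice of $\calC^+$. This is the technical heart of the argument and is handled via the canonical reduction system combined with equivariant tubular neighbourhood arguments, essentially as carried out in Farb--Margalit, Chapter 13.
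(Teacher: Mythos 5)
The paper offers no proof of this statement---it is quoted directly from Farb--Margalit \cite[Corollary 13.3]{Farb}---and your sketch (canonical reduction system, Nielsen--Thurston trichotomy applied to the complementary pieces to exclude pseudo-Anosov components, Alexander method on the annuli to identify $\phi^n$ with a product of twist powers) is precisely the argument carried out in that source, so the approaches coincide. The one imprecision is in the minimality step: since $\phi$ permutes the curves of $\calC^+$, you must discard an entire $\phi$-orbit rather than a single curve $C$, but as $\phi^n$ acts on the curves of one orbit by conjugate twist powers the exponents of an orbit vanish simultaneously, so your argument goes through unchanged.
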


 In the sequel we may identify $h$ and $\phi$ if no confusion is likely.

\begin{remark}  A.~Pichon gives a characterization of  pseudo periodic automorphisms corresponding to the  geometric monodromy of the Milnor fibration of a germ of a 
hypersurface on a normal surface  as those such that all the powers in \ref{ref:iii} above are {positive} \cite{Pich}. We will consider such situation in \Cref{sec:applst} and \Cref{sec:ex}. \end{remark}

	The behavior of the automorphism $h$ in the collection  $\calT$ of annuli is described by a rational amount of rotation.  The notion of  \emph{screw numbers}, that we define next, measures this amount. 
	
The action of $h$ and its powers partitions the collection $\calC$ into orbits of curves. Let $\calC_\ast:=\{C_1, \ldots, C_d\} \subset \calC$ be an orbit of curves defined by $h$. Let $\delta \in \{d, 2d\}$, where $\delta=d$ if $h^d$ sends $C_1$ to $C_1$ with the same orientation and $2d$ otherwise. Let $m_1, m_2 \in \NN$ be the periods of $h$ restricted to the periodic pieces on each side of the orbit and let $n:= \lcm(m_1,m_2)$. Then, we have that $h^{n}$ restricted to 
%a tubular neighborhood of $C_1$ 
$\calT_1$ is an integral power, let us say $\ell$, of a right-handed Dehn twist around $C_1$. 
	
	\begin{definition}\label{def:sc} 
	With the above notations, we define the \emph{screw number} of $h$ at the orbit $\calC_\ast$ by 
	\[
	\scn(h,\calC_\ast)=\scn(h,\calT_\ast):=\frac{\delta \ell}{n}.
	\]
	\end{definition}
	
%	A tubular neighborhood of such an invariant orbit, can be identified with a bamboo with $k+1$ vertices in the plumbing graph $\G$. The bamboo starts at a vertex with valency $\geq 3$ and ends at another such vertex. The rest of the vertices have valency $2$. Let $m_0, \ldots, m_k$ be the multiplicities associated to the monodromy on each of the vertices of the bamboo and let $d:=\gcd(m_0, m_1)=\gcd(m_i, m_{i+1})$. 
%	\begin{prop}\label{tf}  The screw number  of the pseudo-periodic automorphism $h$ at an orbit  $\calC_\ast$ of curves is 
%		\begin{equation}\label{twistf} \scn(h,\calC_\ast):=d^2 \sum_{i=0}^{k-1}\frac{1}{m_i m_{i+1}}.
%		\end{equation}
%		Observe that $d$ divides $\lcm(m_0, m_k)$ and so it divides any 
%		multiple $e$ of $\lcm(m_0, m_k)$.
%	\end{prop}
	
%	\begin{proof}
%		As stated, the proposition is proved in \cite[Lemma 4.24]{Por} but equivalent formulae are classic and numerous in the literature see \cite[Section 2]{N}, \cite[Proposition 3.1]{DBM} or \cite[Lemma 10.3.7]{Wall}.
%	\end{proof}
	\begin{remark}\label{rem:screw_number}
		The screw number is defined for an orbit (by $h$) of simple closed curves. 
		This is because the quantity does not depend on the curve chosen to compute it. In the same way, 
		when it is more convenient, we speak of the screw number at an orbit of annuli or at a given annulus 
		where these are taken to be tubular neighborhoods of an orbit of simple closed curves.
	\end{remark}
	%The previous formulas has appeared several times in the literature with different forms, See for instance  \cite[Section 2]{N}, \cite[Proposition 3.1]{DBM} or \cite{N}.

	We recall now the notion of {intersection product} 
	\begin{equation}\label{eq:intersection_product}
\langle\cdot, \cdot\rangle:H_1( \Sigma; \ZZ) \times H_1(\Sigma, \partial \Sigma; \ZZ)  \to \ZZ
\end{equation}
on $\Sigma$ that is computed as follows.  Firstly, every  element $\alpha \in H_1(\Sigma;\ZZ)$ can be represented by a disjoint union $\gamma_\alpha$ of simple closed curves in $\Sigma$
and any primitive element 
$\beta\in H_1(\Sigma, \partial \Sigma;\ZZ)$ can be represented by a disjoint union of simple closed curves and properly embedded arcs that we denote by $\gamma_\beta$. 
Now, for two such  elements $\alpha, \beta$ we can actually  define 
	$\langle\alpha, \beta\rangle$ to be the algebraic intersection number of $\gamma_\alpha$ and $\gamma_\beta$, i.e., \begin{equation}
	\langle\alpha, \beta\rangle := i(\gamma_\alpha, \gamma_\beta).
	\end{equation} 
Finally, one can compute this number by taking representatives of  $\gamma_\alpha$ and $\gamma_\beta$ in their isotopy classes such that they intersect transversely and then one counts $+1's$ and $-1's$ on each intersection point according to the orientation of $\Sigma$. Note that in this way the intersection form is well defined, non degenerate, and that swapping the factors in the domain of the intersection form results in the multiplication by $-1$ on its value.

	The next lemma explains the geometric meaning of the screw numbers.

\begin{figure}[ht]
\centering
\begin{tikzpicture}
\pgfdeclarepatternformonly{my dots}{\pgfqpoint{-1pt}{-1pt}}{\pgfqpoint{5pt}{5pt}}{\pgfqpoint{6pt}{6pt}}%
{
    \pgfpathcircle{\pgfqpoint{0pt}{0pt}}{.5pt}
    \pgfpathcircle{\pgfqpoint{6pt}{6pt}}{.5pt}
    \pgfusepath{fill}
}
\draw (0,0) circle [radius=1cm];
\draw[->] (1,0) arc [start angle=0,end angle=-180,radius=1cm];
\draw (0,0) circle [radius=2cm];
\draw[->] (2,0) arc [start angle=0,end angle=180,radius=2cm];
\fill[pattern=dots] (2,0)arc [start angle=0,end angle=360,radius=2cm] 
(1,0) arc [start angle=0,end angle=-360,radius=1cm];
\draw[line width=1.2] (0,0) circle [radius=1.5cm];
\draw[->, line width=1.2] (1.5,0) arc [start angle=0,end angle=180,radius=1.5cm];
\node[below] at (0,1.5) {$C$};
\draw[line width=1.2,->] (20:2) node[right] {$I$}--(20:1);
\draw[line width=1.2,->] (-20:1)  -- (-20:2)node[right] {$J$};
\node[left] at (-2,0) {$T$};
\node at (0,-1.25) {$\circlearrowleft$};
\end{tikzpicture}
\caption{The annulus is oriented counterclockwise and induces the orientation on the boundary.}
\label{(fig:lema_screw)}
\end{figure}

\begin{lemma}\label{lem:twist_number}
		Let $h$ be a pseudo-periodic automorphism $h : \Sigma \rightarrow \Sigma$. Let $\calT_\ast$ be an orbit of $d$ annuli by the action of~$h$, and let $T$
		be an annulus in $\calT_\ast$. 

Let $e\in \NN$ be such that $h^e|_{\partial T} =\id$. Let $I, J \hookrightarrow T$ 
		be two properly embedded, disjoint and oriented arcs with one end on each boundary component of $\partial T$ and let 
		$C$ be the core curve of $T$ suitably oriented.
		
We have the following: 
\begin{enumerate}[label=\rm(\roman*)]
\item \label{lem:twist_number_i} $\scn(h^e, T) \in \NN$;
\item \label{lem:twist_number_ia} $\scn(h^e, T)= \frac{e}{d} \cdot \scn(h,  T)$;
\item \label{lem:twist_number_ib}$h^e%|_{T}
(I)-I=\scn(h^e, T)\cdot C \in H_1(F, \ZZ)$.
			
			\item \label{lem:twist_number_ii} %If $I$ and $J$ have the same orientation, then 
			$\langle h^e(I)-I, J\rangle = \pm\frac{e}{d} \scn(h, T)$ if $J\sim\pm I$;
			%$\langle h^e(I)-I, J\rangle = -\frac{e}{d} \scn(h, T)$ if they have opposite orientations.
			
			\item \label{lem:twist_number_iia} in particular, $\langle h^e(I)-I, I\rangle = \frac{e}{d} \scn(h, T)$.
		\end{enumerate}
	\end{lemma}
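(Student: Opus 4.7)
The plan is to reduce every claim to the elementary structure of the mapping class group $\MCG(T, \partial T) \cong \ZZ$, generated by the right-handed Dehn twist $\tau_C$ around the core curve $C$. I first extract two consequences of the hypothesis $h^e|_{\partial T} = \id$: on the one hand, $T$ is setwise fixed by $h^e$, so that $h^e|_T$ represents an element of $\MCG(T, \partial T)$ and is therefore isotopic rel boundary to $\tau_C^p$ for a unique integer $p$; on the other hand, by the rigidity of finite-order orientation-preserving diffeomorphisms of a surface fixing a simple closed curve pointwise, $h^e$ is necessarily identity on the two adjacent periodic pieces, which forces $n := \lcm(m_1, m_2)$ from \cref{def:sc} to divide $e$.

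For \textup{(i)--(iii)}, I will apply \cref{def:sc} to the pair $(h^e, T)$ directly: since the orbit of $T$ under $h^e$ is the singleton $\{T\}$, the new parameters are $d = \delta = 1$ and the new $n$ is $1$, so $\scn(h^e, T) = p$, which lies in $\NN$ under the positivity assumption on screw numbers implicit in the Milnor setting (cf.\ the remark after \cref{thm:can_form}). For \textup{(ii)}, I will use that $h^n|_{T_1} = \tau_C^\ell$ with $\scn(h, T) = \delta \ell / n$ by \cref{def:sc}; since $n \mid e$, iteration yields $h^e|_{T_1} = \tau_C^{\ell e / n}$, so $\scn(h^e, T) = \ell e / n = (e/\delta)\scn(h, T)$, which agrees with the stated $(e/d)\scn(h, T)$ in the case $\delta = d$ (the orientation-reversing case $\delta = 2d$ needs additional care, namely passing to $h^{2d}$). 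Statement \textup{(iii)} will then follow from the standard identity $\tau_C(I) - I = [C]$ in $H_1(T, \partial T; \ZZ)$, iterated to $\tau_C^p(I) - I = p [C]$.

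Statements \textup{(iv)} and \textup{(v)} will follow by pairing the identity from \textup{(iii)} against $[J]$ (respectively $[I]$) through the intersection product. Any properly embedded oriented arc joining the two boundary components of $T$ meets the core $C$ transversely in a single point, so $\langle [C], [J] \rangle = \pm 1$ according to whether $J \sim I$ or $J \sim -I$, and $\langle [C], [I] \rangle = +1$ with the orientation conventions drawn above. The main obstacle I anticipate is the orientation bookkeeping: pinning down signs correctly in \textup{(iv)} and reconciling the $(e/d)$ versus $(e/\delta)$ discrepancy in the $\delta = 2d$ case, where $h^d$ reverses the orientation of $C$ so one must pass to $h^{2d}$ before reading off the Dehn-twist exponent $\ell$.
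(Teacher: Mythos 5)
Your proposal is correct and follows essentially the same route as the paper's proof: identify $h^e|_T$ with a power $\tau_C^p$ of the core Dehn twist via $\MCG(T,\partial T)\cong\ZZ$, extract $p=\ell e/n$ from the definition of the screw number together with the divisibility $n\mid e$, and obtain \ref{lem:twist_number_ib}--\ref{lem:twist_number_iia} from the identity $\tau_C(I)-I=[C]$ paired against arcs. The only quibbles are cosmetic --- that identity lives in the absolute homology $H_1(T;\ZZ)$ rather than $H_1(T,\partial T;\ZZ)$ --- and the caveats you rightly flag (positivity versus mere integrality in \ref{lem:twist_number_i}, and the amphidrome case $\delta=2d$ in \ref{lem:twist_number_ia}) are points the paper's own proof also passes over without comment.
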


	\begin{proof}
		The statement \cref{lem:twist_number_i} 
		follows from the hypothesis that $h^e|_{\partial T} =\id$ and the definition of screw number. The statement \cref{lem:twist_number_ia}  holds because of the additivity of screw numbers in automorphisms of cylinders.
		%, we have that 
		$\scn(h^e, T)= \frac{e}{d} \cdot \scn(h,T)$ \cref{lem:twist_number_ia}. 
		Observe also that $d$ divides $e$.
		To state \cref{lem:twist_number_ib}, note first  that $h^e|_{T}(I)-I$ is a well defined element in the absolute homology of $T$ because $h^e|_{T}(I)$ and $I$ share their ends, and have opposite orientation at their ends. Therefore, $h^e|_{T}(I)-I$ has to be an integral multiple of $C$. This integer equals the number of Dehn twists that  $h^e|_{T}$ consists of. By \cref{lem:twist_number_ia}, it is exactly $\scn(h^e, T)= \frac{e}{d} \cdot \scn(h,  T)$.
				To prove the two last items, we observe the following. We consider a model  $\MS^1 \times [-\frac{1}{2},\frac{1}{2}]$ for our annulus $T$ and without loss of generality we may assume that $I$ is identified with the vertical segment $\{1\} \times [-\frac{1}{2},\frac{1}{2}]$ oriented from $-\frac{1}{2}$ to $\frac{1}{2}$. So $h^e|_{T}(I)-I$ may be represented by $\frac{e}{d}$ circles properly oriented.
	\end{proof}

	Finally, it is useful to associate to a pseudo-periodic surface automorphism as above, its Nielsen-Thurston graph (sometimes also called partition graph), see for example \cite[Section 6.1]{MatMont} for more on this graph.
		
		\begin{definition}\label{def:nielsen_graph}
		Let $h: \Sigma \to \Sigma$ be a pseudo-periodic automorphism $h: \Sigma \rightarrow \Sigma$ in canonical form (see \cref{thm:can_form})
		and let $\calC$ be the collection of separating curves. We define the {\em  
			Nielsen-Thurston graph}  associated with $h$, and denote by $G(h)$, as the graph that
		\begin{enumerate}[label=\rm(\roman{enumi})]
			\item has one vertex $v$ for each connected component $\Sigma_v$ in $\Sigma 
			\setminus \calC$
			\item one edge connecting the vertices 
			$v$ and $w$ for each separating curve $\calC_i$  such that the boundary components of
			$\calT_i$ are contained in $\Sigma_v$ and $\Sigma_w$;
% 			two vertices $v,w$ are joined by as many edges as curves in 
% 			$\calC$ share $\Sigma_v$ and $\Sigma_w$.
note that $v$ might be equal to $w$. 
		\end{enumerate}
		Moreover, there is a collapsing map $\xi: \Sigma \rightarrow G(h)$, collapsing  each connected component $\Sigma_v$ to the vertex $v$ and projecting to the factor $[-\frac{1}{2},\frac{1}{2}]$ along the annuli, and an induced periodic isomorphism $h_{G(h)}$ such that $\xi \circ h = h_{G(h)} \circ \xi$.
	\end{definition}

\begin{remark}\label{re:inducv}The topology of the Nielsen-Thurston graph $G(h)$ encode important topological features of $h$ \cite[Section 7.4]{Weber}. \end{remark}

\section{The %nilpotent 
operator \texorpdfstring{$N$}{N} and the quadratic form \texorpdfstring{$\tilde{Q}$}{tQ}}
\mbox{}

\subsection{The %nilpotent 
operator \texorpdfstring{$N$}{N}}
	\mbox{}
	
	Let $h: \Sigma \to \Sigma$ be a pseudo-periodic automorphism of a surface with 
	$h|_{\partial \Sigma}= \id$. 	Let 
\[
h_\ast: H_1(\Sigma, \partial \Sigma) \to H_1(\Sigma, \partial \Sigma)
\] 
be the induced operator on the 
first relative homology group of $(\Sigma, \partial \Sigma)$ with $\ZZ, \QQ$ or $\CC$ coefficients. 	%When we write $H_1(\Sigma)$ or $H_1(\Sigma,\partial \Sigma)$ (without coefficients) we mean singular homology and the	corresponding statement is meant to work with any coefficients $\ZZ, \QQ$ or $\CC$. 

For the linear operator $h_\ast$ defined on the vector space $H_1(\Sigma, \partial \Sigma; \QQ)$  or $H_1(\Sigma, \partial \Sigma; \CC)$ there is a canonical 
decomposition 
\[
h_\ast= h_s h_u = h_uh_s,
\]
where $h_u$ is the unipotent part 
and $h_s$ is the semisimple part of $h_\ast$. The semisimple part  
$h_s$ codifies the information about the eigenvalues of $h_\ast$ and the unipotent $h_u$ 
	codifies the information about the Jordan blocks of $h_\ast$. In particular,  $h_s$ 
	is a diagonalizable linear operator and $h_u$ is, up to change of basis, an 
	upper triangular matrix with 1's on the diagonal.
	
	 In the literature,  one can find  three, a priori, different nilpotent operators associated with $h_\ast$:
	\begin{equation}\label{eq:nilpotent}
	h_u - \id, \qquad -{\rm log} h_u, \qquad (h_\ast^e-\id)/e,
	\end{equation}
	 %the last one being 
	 defined only on $\QQ$ or $\CC$ and the exponent  $e \in \NN$ being the smallest natural number such that $h^e$ is the identity restricted to each periodic piece, i.e., $e$ is the least common multiple of the orders of the periodic pieces\label{pag:e}. However, we define another nilpotent operator that will play a central role in the present work. %We denote, as well, by $h_\ast$ the operator induced by the monodromy on the homology with integer coefficients.
	
%	\begin{remark}\label{rem:relative_absolute}
%		Note that the above operators are well defined as operators from $H_1(F, \partial F; \ZZ)$ to $H_1(F; \ZZ)$. This is because the geometric monodromy is well defined as a class of diffeomorphisms that restrict to the identity on $\partial F$ up to isotopy fixing the boundary.
%	\end{remark}
	
%	In the case of isolated plane curve singularities, the three are actually the same operator as we show next. Notice that expanding 
%	the 
%	logarithmic expression of $ -{\rm log} h_u$ around the identity and taking into 
%	account that 
%	the only Jordan blocks are of size $2$ (because of the Monodromy Theorem \Cref{thm:monodromy_thm}), we have that
%	$$-{\rm log} h_u= (h_u-{\rm id}) - \frac{(h_u-{\rm id})^2}{2} + \frac{(h_u-{\rm id})^3}{6} - 
%	\cdots = h_u-{\rm id}.$$
%	Moreover, noticing that $h^e_\ast=(h_u \cdot h_s)^e=h_u^e$, we have that
%	$$\frac{(h_\ast^e-{\rm id})}{e}= \frac{(h_u^e-{\rm id})}{e}= \frac{-{\rm log} h_u^e}{e} = 
%	-{\rm log}h_u$$
%	where the second equality follows from taking the Taylor expansion of the logarithm. These operators are nilpotent. In the case that the monodromy comes from a plane curve singularity, it is the operator that gives rise to the weight filtration of the MHS cohomology of $F$ \cite[3.12]{DBM}. Observe that this construction is purely topological  and so the weight filtration only contains topological information.

	\begin{definition}\label{def:nilpotent_N}
		Let $h$ be a pseudo-periodic automorphism of a compact surface with boundary $\Sigma$ and let $e \in \NN$ be the smallest natural number such that $h^e$ is the identity restricted to each periodic piece (i.e. $e$ is the least common multiple of the orders of the periodic pieces). Then we define the \emph{nilpotent-like operators}
		$N: H_1(\Sigma,\partial \Sigma; \ZZ) \longrightarrow H_1(\Sigma; \ZZ)$  and $N': H_1(\Sigma; \ZZ) \longrightarrow H_1(\Sigma; \ZZ)$ given by 
\[
N([\gamma]) =   [h^e(\gamma) -\gamma]\in H_1(\Sigma; \ZZ);
\]
	$N'$ is defined by same expression.
%		$$N:=(h^e_\ast - \id):H_1(F, \partial F; \ZZ) \to H_1(F; \ZZ).$$
	\end{definition}

This operator is related to the others %(\Cref{eq:nilpotent}) 
in \eqref{eq:nilpotent} but has the additional property that is well defined on the module $H_1(\Sigma, \partial \Sigma;\ZZ)$. It is nilpotent because its definition coincides with the third equation definition of the nilpotent operator of  \eqref{eq:nilpotent} times a constant.

%Note that this operator is well defined in the general case and not only in the case of monodromies associated to isolated plane curve singularities since its definition only uses the fact that $h$ is pseudo-periodic.

% 
% 	This operator is related to the others (\Cref{eq:nilpotent}) but has the additional property that is well defined on $H_1(F, \partial F;\ZZ)$. %Note that this operator is well defined in the general case and not only in the case of monodromies associated to isolated plane curve singularities since its definition only uses the fact that $h$ is pseudo-periodic.
	
	\subsection{The quadratic forms $Q$ and $\tilde{Q}$}
\mbox{}

\begin{definition}\label{<>} We associate to the %nilpotent 
operator $N$ the 
		 form
		\[
		Q:=
		\langle N\cdot , \cdot\rangle : 
		H_1(\Sigma, \partial \Sigma;\ZZ) \times H_1(\Sigma, \partial \Sigma;\ZZ) 
		\longrightarrow \ZZ
		\]
		which can be pushed down to a form
\[
\tilde{Q}: \frac{H_1(\Sigma, \partial \Sigma; \ZZ)}{\ker N} \times \frac{H_1(\Sigma, \partial \Sigma; \ZZ)}{\ker 
N} \longrightarrow \ZZ.
\]
Analogously we associate to $N'$ the form
		\[
		Q':=
		\langle N'\cdot , \cdot\rangle : 
		H_1(\Sigma;\ZZ) \times H_1(\Sigma;\ZZ)
		\longrightarrow \ZZ.
		\]
	\end{definition}

	\begin{remark}
		 The  forms defined above are symmetric. Indeed, $h^e$ is a diffeomorphism of the surface so $\langle v,w\rangle =\langle h^e(v),h^e(w)\rangle $. It follows by a straightforward calculation that $\langle Nv,w\rangle =-\langle v,Nw\rangle $ and so $\langle Nv, w\rangle  = \langle Nw,v\rangle $.

	\end{remark}
	
	The bilinear form $\tilde{Q}$ can be computed in terms of the screw numbers with help of \Cref{lem:twist_number}. Let 
$v,w \in H_1(F, \partial F; \ZZ)$ and let $\gamma_v,\gamma_w$ be  oriented curves on $F$ representing the classes $v,w$. For $C\in\mathcal{C}$,  let $T_C$ be a tubular neighborhood  of the curve $C$, and  % and an order of the components $\partial_1 T_i$ and $\partial_2 T_i$ if thier boundaries. 
let $s_C=\scn (h^e, T_C)$ be the screw number associated to the annulus $T_C$, see \cref{lem:twist_number}. 
The expressions 	
\[
N(v)= \sum_{C\in\mathcal{C}}  s_C \langle [C], v \rangle [C] \quad \hbox{ and } \quad  \tilde{Q}(v,w)= \sum_{C\in\mathcal{C}} s_C \langle [C], v \rangle \langle [C],w \rangle
\]
are consequence of \cref{def:nilpotent_N},  \cref{lem:twist_number}, and \cref{<>}.

\medskip

The following theorem is the main result of this paper.
	
	\begin{thm}\label{thm:def_pos}
If all the screw numbers $s_C$ associated to orbits of annuli whose core curves are non-nullhomotopic are positive, then the symmetric bilinear form $\tilde{Q}$ introduced in 
		\emph{\Cref{<>}} is positive definite.\end{thm}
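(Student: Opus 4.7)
The plan is to exploit the sum-of-squares formula
\[
\tilde Q(v,v) = \sum_{C\in\mathcal{C}} s_C \langle [C], v \rangle^2
\]
that was just derived from the twist identity in \cref{lem:twist_number}. From here the theorem essentially reads itself off, and I would organize the argument in two short steps: non-negativity of $\tilde Q$ on representatives, followed by strict positivity on the quotient.

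\emph{Non-negativity.} I would first split the sum according to whether $C \in \mathcal{C}$ is nullhomotopic or not. For any nullhomotopic $C$, the class $[C]$ vanishes in $H_1(\Sigma;\ZZ)$, so $\langle [C], v\rangle = 0$ for every $v$, and the value of $s_C$ is irrelevant to the sum. For each remaining $C$ --- all separating curves in $\mathcal{C}^+$ together with the boundary-parallel curves --- the hypothesis supplies $s_C > 0$, hence every summand $s_C \langle [C], v\rangle^2$ is a non-negative integer and $\tilde Q(v,v) \geq 0$.

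\emph{Definiteness.} Next I would suppose $\tilde Q(\bar v, \bar v) = 0$ for $\bar v \in H_1(\Sigma,\partial\Sigma;\ZZ)/\ker N$ and show $\bar v = 0$. Choosing a representative $v$, vanishing of a sum of non-negative integers forces $\langle [C], v\rangle = 0$ for every non-nullhomotopic $C \in \mathcal{C}$. The key structural observation is that $N$ satisfies a matching formula
\[
N(v) = \sum_{C\in\mathcal{C}} s_C \langle [C], v\rangle\, [C],
\]
and every summand on the right vanishes under our assumptions (nullhomotopic $C$ because $[C] = 0$, non-nullhomotopic $C$ because the pairing vanishes). Hence $v \in \ker N$, i.e., $\bar v = 0$ in the quotient.

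I do not expect a serious obstacle here: once the diagonal expressions for $N$ and $\tilde Q$ are in place, the argument is almost tautological. The one point meriting a careful line is that the dichotomy nullhomotopic versus non-nullhomotopic exhausts $\mathcal{C}$ and that the defining sums for $N$ and $\tilde Q$ are annihilated by exactly the same linear conditions on $v$. This matching pattern is precisely what makes the quotient by $\ker N$ the natural domain on which $\tilde Q$ becomes positive definite.
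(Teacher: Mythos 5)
Your argument is correct and follows essentially the same route as the paper: your sum-of-squares identity $\tilde Q(v,v)=\sum_{C}s_C\langle[C],v\rangle^2$ is precisely the paper's computation $Q(v,v)=\sum_i s_i(p_i^+-p_i^-)^2$ in \eqref{eq:sum_cylinder} and \eqref{eq:Q} (since $\langle[C_i],v\rangle=\pm(p_i^+-p_i^-)$), and your definiteness step is the paper's closing argument that $Q(v,v)=0$ forces every $\langle[C],v\rangle$ to vanish and hence $N(v)=0$. The only caveat is that the diagonal formulas for $N$ and $\tilde Q$ you take as your starting point are themselves justified in the paper by the arc-counting argument that constitutes the proof (transversality of $\gamma_v$ with $\calC$, the splitting of $\gamma_v\cap T_i$ into the arcs $I_i^{\pm}$, and \cref{lem:twist_number}), so a fully self-contained write-up would need to include that derivation rather than cite it.
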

	
	\begin{proof} 
	
		First we are going to prove the semi-definiteness for $Q$. 
		
		So let $v \in H_1(\Sigma, \partial \Sigma; \ZZ)$ be an element and let $\gamma_v \in \Sigma$ be a collection of disjoint oriented simple closed curves and oriented properly embedded arcs representing $v$. Let $\calC$ be the collection of separating simple closed curves of the monodromy including curves parallel to all boundary components. Without loss of generality, we can assume that $\gamma_v$ and $\calC$ intersect transversely.
		
		By definition of pseudo-periodic, a power of  the geometric monodromy, let's say $h^e$, can be taken to be the identity outside a small tubular neighborhood of $\calC$, we can assume that, after a small isotopy, all the intersection points of $h^e(\gamma_v)$ and~$\gamma_v$ occur in that small neighborhood.
		
		Let $C_1, \ldots C_k$ be the subcollection of oriented simple closed curves in $\calC$ that $\gamma_v$ intersects non-emptily  and such that the class of $C_i$ in homology is not $0$. Let $T=T_1 \cup \ldots \cup T_k$ be the union of disjoint tubular neighborhoods $T_i$ of these $k$ curves. We can assume that $\gamma_v \cap T$ is a collection $I$ of oriented disjoint segments. And for each $i$, we define $\gamma_v \cap T_i = I_i$ which is an union of  segments $I_{i,1}, \ldots, I_{i,a_i}$ each connecting one boundary component of $T_i$ with the other. By definition of $e$, $h^e|_{\partial T_i}= \id$ for all $i=1, \ldots, k$. Moreover, $h_\ast^e(I_{i,k})-I_{i,k}$ is, in the integral relative homology of $\Sigma$, an integral multiple of $C_i$. Actually, this number is by construction $s_i:=\scn(h^e, T_i)$ which can be computed using \Cref{lem:twist_number} and \Cref{tf} from the dual decorated graph.
		
		The segments in $I_{i}$ are oriented and so by fixing an order on the two boundary components of $T_i$, let us say $\partial_1 T_i$ and  $\partial_2 T_i$, we can distinguish between the union of those arcs that go from $\partial_1 T_i$ to $\partial_2 T_i$ which we denote by $I^+_i$ and the
		others which go in the opposite direction and we denote by $I^-_i$. 
		So $I_i = I^+_i\sqcup I^-_i$.
		Let $p^+_i$ be number of arcs in $I^+_i$ and analogously define $p^-_i$ as the number of arcs in $I^-_i$. Remark that $a_i=p^+_i + p^-_i$.
		%Finally, set that $\langle C_i, I_{i,b}\rangle = 1$ (resp. -1) if $I_{i,b} \subset I^+_i$ (resp. if $I_{i,b} \subset I^-_i$).}
		
		\begin{figure}[!ht]
			\begin{tikzpicture}[scale=.8]
\tikzset{->-/.style={decoration={
  markings,
  mark=at position #1 with {\arrow[scale=1.5]{>}}},postaction={decorate}}}
\draw (0,3) ellipse [x radius=2,y radius=.5];
\draw[dashed] (-2,-3) arc [start angle=180,end angle=0,x radius=2,y radius=.5];
\draw(2,-3) arc [start angle=0,end angle=-180,x radius=2,y radius=.5];
\draw (2,-3) -- (2,3);
\draw (-2,-3) -- (-2,3);
\node[above left] at (-2,3) {$T$};
\draw[blue,->-=.5] ($(0,3)+({2*cos(45)},{.5*sin(-45)})$) -- ($(0,-3)+({2*cos(45)},{.5*sin(-45)})$) node[left,black,pos=.5] {$I_1^-$};
\draw[red,->-=.5] ($(0,-3)+({2*cos(135)},{.5*sin(-135)})$) -- ($(0,3)+({2*cos(135)},{.5*sin(-135)})$) node[right,black,pos=.5] {$I_1^+$};

\draw[dashed,green!20!black,->-=.5] ($(0,-3)+({2*cos(90)},{.5*sin(90)})$) -- ($(0,3)+({2*cos(90)},{.5*sin(-90)})$) node[right,black,pos=.55] {$I_2^+$};
\draw[green!20!black] ($(0,3)+({2*cos(90)},{.5*sin(-90)})$) -- ($(0,3)+({2*cos(90)},{.5*sin(90)})$);

\begin{scope}[xshift=8cm]
\tikzset{->-/.style={decoration={
  markings,
  mark=at position #1 with {\arrow[scale=1.5]{>}}},postaction={decorate}}}
\draw (0,3) ellipse [x radius=2,y radius=.5];
\draw[dashed] (-2,-3) arc [start angle=180,end angle=0,x radius=2,y radius=.5];
\draw(2,-3) arc [start angle=0,end angle=-180,x radius=2,y radius=.5];
\draw (2,-3) -- (2,3);
\draw (-2,-3) -- (-2,3);
\node[above left] at (-2,3) {$T$};
\draw[blue,->-=.5] ($(0,3)+({2*cos(45)},{.5*sin(-45)})$) -- ($(0,-3)+({2*cos(45)},{.5*sin(-45)})$);
\draw[red,->-=.5] ($(0,-3)+({2*cos(135)},{.5*sin(-135)})$) -- ($(0,3)+({2*cos(135)},{.5*sin(-135)})$);

\draw[dashed,green!20!black,->-=.4] ($(0,-3)+({2*cos(90)},{.5*sin(90)})$) -- ($(0,3)+({2*cos(90)},{.5*sin(-90)})$) ;
\draw[green!20!black] ($(0,3)+({2*cos(90)},{.5*sin(-90)})$) -- ($(0,3)+({2*cos(90)},{.5*sin(90)})$);

\draw[red,->-=.6] ($(0,-3)+({2*cos(135)},{.5*sin(-135)})$) to[out=90,in=-90] (2,-1.25);
\draw[,dashed, red,->-=.6] (2,-1.25) to[out=90,in=-90] (-2,1.25);
\draw[red,->-=.6] (-2,1.25) node[left] {$h^e(I_1^+)-I_1^+$} to[out=90,in=-90] ($(0,3)+({2*cos(135)},{.5*sin(-135)})$);
\end{scope}
\end{tikzpicture}
			\caption{On the left we see three arcs $I^+_1,I^+_2$, and $I^-_1$. Two of them oriented in the same direction and the other one in the opposite direction. On the right we see the cycle $h^e(I^+_1)-I^+_1$ and observe that its intersection with $I^+_2$ (in that order) is $+1$, and its intersection with $I^-_1$ is~$-1$.}
			\label{fig:theorem_arcs_1}
		\end{figure}

		For $I_{i,b} \in I_i$, we have that $\langle h^e(I_{i,b})-I_{i,b}, I_i\rangle $ equals to
		\[
		\sum_{j=1}^{a_i} \langle h^e(I_{i,b})-I_{i,b}, I_{i,j}\rangle  = \sum_{j=1}^{a_i} \langle s_i C_i, I_{i,j}\rangle   = 
		\begin{cases}
		s_i(p^+_i-p^-_i), & \text{for }  I_{i,b} \subset I^+_i\\
		s_i(p^-_i-p^+_i), & \text{for }  \text{for }  I_{i,b} \subset I^-_i
		\end{cases}.
		\]

		The above formula follows because  $h^e$ is a composition of right-handed Dehn twists in a tubular  neighborhood $\calT$ of $\calC$. In particular the next to last  (resp. last) equality follows from \Cref{lem:twist_number}\ref{lem:twist_number_ii} (resp. \ref{lem:twist_number_ib}).

		With these two equalities, we can compute
		\begin{equation}\label{eq:sum_cylinder}
		\begin{split}\langle h^e(I_i)-I_i, I_i\rangle &= \sum_{j=1}^{a_i} \langle h^e(I_{i,j})- I_{i,j}, I_i\rangle  
		%&= \sum_{j=1}^{a_i} \sum_{k=1}^{a_i} \langle h^e(I_{i,j})-I_{i,j}, I_{i,k}\rangle  \\
		= \sum_{b=1}^{p^+_i}  s_i(p^+_i-p^-_i) + \sum_{c=1}^{p^-_i} s_i(p^-_i-p^+_i) \\
		&= s_ip^+_i(p^+_i-p^-_i) + s_ip^-_i(p^-_i-p^+_i) = s_i(p^+_i-p^-_i)^2 \geq 0.
		\end{split}
		\end{equation}
		Now, we have that
		\begin{align}\label{eq:Q}
		%	Q(v,v)=\langle Nv,v\rangle  &= \sum_{i=1}^k \langle h_\ast^e(I)-I, I\rangle  = \sum_{i=1}^k \sum_{j=1}^{a_i} \langle h^e(I_i)-I_i, I_i\rangle 
		Q(v,v)=\langle Nv,v\rangle  &= \sum_{i=1}^k \sum_{j=1}^k\langle h_\ast^e(I_i)-I_i, I_j\rangle  = \sum_{i=1}^k  \langle h^e(I_i)-I_i, I_i\rangle ,\end{align}
		because $C_i$ and $I_j$ are disjoint  and $\langle h_\ast^e(I_i)-I_i, I_j\rangle =(p^+_i - p^-_i)s_i\langle C_i, I_j\rangle =0$ whenever $i \ne j$.
		
		Finally, the result that $Q$ is positive semi-definite follows because  all the summands of the last term in \eqref{eq:Q} are non-negative by \eqref{eq:sum_cylinder}.
		
		Now we need to prove that $\tilde{Q}$ is positive definite if and only if $Q$ is positive semi-definite. This is equivalent to showing that $Q(v,v)=0$ if and only if $v \in \ker N$. It is clear that if $ v \in \ker N$ then $Q(v,v)=0$. Assume now that $Q(v,v)=0$, then, by \eqref{eq:sum_cylinder} and \eqref{eq:Q}, this implies that $p_i^+ = p_i^-$ for all $i=1, \ldots, k$. This is the same as saying that in each cylinder, there are as many intervals going in one direction as intervals going in the opposite direction. In this case, we can invoke \Cref{lem:twist_number} and see that this implies that $h^e(I_i)-I_i$ consists of a number of positive multiples of $C_i$ and the same absolute number of negative multiples of $C_i$. Hence $h^e(I_i)-I_i$ is $0$ in homology and so $v \in \ker N$.
	\end{proof}

	%Define the $\ZZ$-linear operator $N'$ as the restriction of $N$ to $H_1(\Sigma; \ZZ)$. Notice that the form $\tilde{Q}$ restricts to the quotient $H_1(F; \ZZ)/\ker N'$ \textcolor{red}{because ... EXPLICAR}. 
	Recall that an integral quadratic form  ${Q}$ is called even if  ${Q} (u,u)$ is always an even number.

	\begin{cor}\label{cor:even} The symmetric bilinear map $Q'$ on $H_1(F; \ZZ)$ is even if the quotient of the Nielsen-Thurston graph $G(h)$ under the action induced by $h$  is a tree. %In particular, $\tilde{Q}$ restricted to $H_1(F; \ZZ)/\ker N'$ is even in the case of plane curve singularities.
	\end{cor}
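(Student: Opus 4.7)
The plan is to reduce the evenness of $Q'$ to a vanishing statement in mod-$2$ relative homology. First, an argument analogous to the one giving the formula for $\tilde Q$ shows that
\[
Q'(v,v) \;=\; \sum_{C\in\calC} s_C \,\langle[C],v\rangle^{2}
\]
for every $v\in H_1(F;\ZZ)$. Using $x^{2}\equiv x\pmod 2$, together with the observation that $\langle[C],v\rangle=0$ whenever $C$ is a boundary-parallel curve (since such $[C]$ lies in the left radical of the absolute intersection pairing), one gets
\[
Q'(v,v)\;\equiv\;\langle\omega,v\rangle \pmod 2,\qquad \omega\,:=\,\sum_{C\in\calC^{+}} s_C\,[C]\;\in\; H_1(F;\ZZ/2).
\]
Because the left radical of the mod-$2$ absolute intersection form on $H_1(F;\ZZ/2)$ is exactly the image of $H_1(\partial F;\ZZ/2)\to H_1(F;\ZZ/2)$, the desired evenness reduces to showing that $\omega=0$ in $H_1(F,\partial F;\ZZ/2)$.

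By \Cref{lem:twist_number}\ref{lem:twist_number_ia}, $s_C$ is constant along each $h$-orbit $\calC_*\subset\calC$, so it suffices to prove that the class $\omega_*:=\sum_{C\in\calC_*}[C]$ vanishes in $H_1(F,\partial F;\ZZ/2)$ for every orbit $\calC_*\subset\calC^{+}$. Here is where the tree hypothesis enters. The orbit $\calC_*$ projects to a single edge $\bar e$ of $G(h)/h$, and because $G(h)/h$ is a tree, $\bar e$ is a bridge, so $(G(h)/h)\setminus\{\bar e\}$ splits into two subtrees $T_1,T_2$. Lifting back via the collapse map $\xi\colon F\to G(h)$ produces a decomposition
\[
F\setminus\bigcup_{C\in\calC_*}T_C \;=\; F_1\sqcup F_2,
\]
where $F_i$ is the union of all Nielsen--Thurston pieces and of all non-orbit annuli whose image in $G(h)/h$ lies in $T_i$.

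The argument closes with a mod-$2$ boundary count. Since the two endpoints of $\bar e$ lie in different subtrees, the two endpoints of every lifted edge $e_C$ with $C\in\calC_*$ lie in different $F_i$; hence each annulus $T_C$ contributes exactly one boundary circle to $F_1$ and one to $F_2$. Consequently, as mod-$2$ $1$-cycles in $F$,
\[
\partial F_1 \;\equiv\; (\partial F\cap F_1) \,+\, \sum_{C\in\calC_*} C \pmod 2.
\]
Because $F_1$ is a $2$-chain in $F$, $[\partial F_1]=0$ in $H_1(F;\ZZ/2)$; passing to $H_1(F,\partial F;\ZZ/2)$ annihilates $[\partial F\cap F_1]$, yielding $\omega_*=0$ as required. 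The main subtlety, and the step I expect to demand the most care, is verifying that the tree property of $G(h)/h$ is precisely what forces each orbit-curve to have one side in $F_1$ and the other in $F_2$: a loop in the quotient would allow an edge of $G(h)$ with both endpoints projecting to the same side, producing an even number of boundary circles on one side and destroying the mod-$2$ identity.
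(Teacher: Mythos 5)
Your argument is correct and rests on the same key observation as the paper's one-line proof: since the quotient graph $G(h)/h$ is a tree, each edge is a bridge, so each orbit of annuli separates $F$, and hence every closed curve on $F$ meets each orbit an even number of times. You phrase this dually --- showing that $\sum_{C\in\calC_*}[C]$ vanishes in $H_1(F,\partial F;\ZZ/2)$ by exhibiting the bounding subsurface $F_1$ --- which is an equivalent, more detailed rendering of the same idea.
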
 
	\begin{proof}The statement follows because, under the above hypothesis,  every embedded closed curve $\gamma_v$ must go through each orbit  $\calT$ of annuli an even number of times.
	\end{proof}

	To finish this section we deduce explicit  formulas  for $N$ and $\tilde{Q}$, see \eqref{eq:formN} and \eqref{eq:formtQ} below.

The following expressions for $N$ and $\tilde{Q}$ are implicit in the  proof of \Cref{thm:def_pos}.  For each tubular neighborhood $T_i$, let us fix a choice of an order of the components $\partial_1 T_i$ and $\partial_2 T_i$, and consider the numbers $s_i$, $p^+_i, p^-_i,$ and $a_i$ be as in the proof of \Cref{thm:def_pos}. Moreover, set $b_i=p^+_i - p^-_i$. Then, we have
\begin{equation}\label{eq:formN}N(v)= \sum_{i=1}^k b_i s_i C_i.\end{equation}
Moreover, let us rename  now $p^+_i$, $p^-_i$, $a_i$ and $b_i$ as  $p^+_i(v), p^-_i(v), a_i(v),$ and $b_i(v)$. 
Given an element $w \in H_1(F, \partial F; \ZZ)$, let us choose a representative $\gamma_w$ for $w$,  and,  taking into account the previous choice of an order of the components $\partial_1 T_i$ and $\partial_2 T_i$,  we define analogously the numbers $p^+_1(w)$, $p^-_i(w), a_i(w),$ and $b_i(w)$ for each $i$. Then,
\begin{gather*}\left\langle \sum_i b_i(v) s_i C_i, \gamma_w \right\rangle =  
\left\langle \sum_{j} p^+_j(v) s_j C_j - \sum_{k} p^-_k(v) s_k C_k, \gamma_w \right\rangle=\\
\sum_{j} p^+_j(v) s_j (p^+_j (w) - p^-_j (w)) - \sum_{k} p^-_k(v) s_k (p^-_k (w) - p^+_k (w)),
\end{gather*}
and, therefore, we get
\begin{equation}\label{eq:formtQ}\tilde{Q}(v,w)= \sum_i s_i b_i(v) a_i(w).\end{equation}

	\begin{comment}
	\begin{cor}
		Assume that there are $m$ different orbits of curves in $\calC$ formed by curves that are non-trivial in $H_1(F; \ZZ)$.  Let $d_j$ be the number of curves in the $j$-th orbit and let $k_j$ be the number of Dehn twists that $h^e$ equals to in a neighborhood of a curve of the $j$-th orbit. Then the trace of $Q|_{H_1(F; \ZZ)}$ is given by the following formula:
		\begin{equation}\label{eq:trace_Q}
			\sum_{j=1}^m 2k_j(d_j-1).
		\end{equation}
	\end{cor}
	\begin{proof}
		Consider the dual graph $\gss$ of the semistable reduction associated to the mapping torus of $e$. Then the trace of $Q$ corresponds with $\sum_{i=1}^{b_1} Q(e_i,e_i)$ where $b_1$ is the first Betti number of $H_1(\gss;\ZZ)$ and $e_1, \ldots, e_k$ is a basis for $H_1(\gss;\ZZ)$.
	\end{proof}
		\end{comment}

	\section{Applications to Singularity Theory}\label{sec:applst}

Let $f:(X,0) \to (\C,0)$ be the germ of a reduced holomorphic map germ defined on an isolated complex surface singularity $(X,0)$. In this situation there is a locally trivial fibration  
\[
\tilde{f}:=f_{|_{f^{-1}(\partial D_\delta) \cap B_\epsilon \cap X}}: f^{-1}(\partial D_\delta) \cap B_\epsilon \cap X \to \partial D_\delta,
\]
where $B_\epsilon$ is a closed ball of small radius $\epsilon>0$ and $D_\delta \subset \C$ is a small disc of radius $\delta>0$ small enough with respect to $\epsilon$. The map $\tilde{f}$ is called the {\em Milnor-L\^e fibration} and the fact that it is a fibration was proven in \cite{Le}. The fiber $\tilde{f}^{-1}(t)$ is called the Milnor fiber of $f$ at $0$. It is a compact and oriented surface with non-empty boundary. From now on, we denote it by $F$. 
	
Let $h: F \to F$ be the automorphism of $F$ defined by the locally trivial fibration $\tilde{f}$, up to isotopy. The  automorphism $h$ is called  \emph{geometric monodromy}. Since $f$ is reduced we have that $h|_{\partial F}= \id$. The geometric monodromy is known to be a pseudo-periodic automorphism of $F$ \cite[Theorem 4.2, its addendum and Section 13]{en}.	 Morevover, the geometric monodromies induced by such map germs were characterized by  Pichon in \cite{Pich} as those pseudo-periodic surface homeomorphisms $h$ such that for some $e \in \NN$, $h^e$ is a composition of positive powers of right-handed Dehn twists around disjoint simple closed curves including all boundary components. 

Therefore, the results of the previous section apply to the geometric monodromy $h: F \rightarrow F$, and from now on we set $\Sigma=F$ and $\sigma=h$. 

The rest of the paper is devoted to study the particular case of the geometric monodromy $h: F \rightarrow F$ using the additional structure coming from algebraic nature of $f : (X,0) \rightarrow (\C,0)$. 

More concretely, we will show that, as a consequence of the negative definiteness of the intersection matrix, all the screw numbers are positive. Hence, the hypothesis of \Cref{thm:def_pos} is always fulfilled.  Moreover, the hypothesis of \Cref{cor:even} is always satisfied in the case $(X,0)=(\C^2,0)$. Moreover, we will explain how to perform concrete calculations showing that numerical invariants associated to $\tilde{Q}$  are able to distinguish classic examples of pairs of plane curve singularities with different topological type but same spectral pairs, due to Schrauwen, Steenbrink and Stevens~\cite{Steen}, or same Seifert forms, due to Du Bois and Michel~\cite{DBM2}, see respectively \cref{ex:SSS} and \cref{ex:DBM}.

	\subsection{Embedded resolution, and the twist formula}\label{subsec:twist-res}
	\mbox{}
	
%	{\color{red} Hay que combinar mejor estas dos subsecciones}
	
	We recall the notion of embedded resolution of the germ $f:(X,0) \to (\C,0)$  of a reduced holomorphic map germ defined on an isolated complex surface singularity $(X,0)$. 
	
	\begin{definition} An \emph{embedded resolution} of $f:(X,0) \to (\C,0)$ is a map $\pi : (\tilde{X}, E) \rightarrow (X,0)$ such that $\tilde{X}$ is smooth, $\pi$ is bimeromorphic, and the exceptional divisor $E$,  and the divisor $(f \circ \pi)^{-1}(0)$, called the total transform of $f$,  have normal crossing support on $\tilde{X}$. 
	 \end{definition}
	
It is customary to associate a {\em dual graph} with a normal crossing divisor in the following way. First, one associates a vertex $v$ with each irreducible component $E_v$ of the normal crossing divisor. {The set of vertices is denoted by $\calV$.}  Then, one associates an edge $e$ between two (not necessarily different) vertices if the corresponding irreducible components intersect (or have autointersections). {The set of edges is called $\calE$. The \emph{valency} $\delta_v$ of a vertex~$v$ is the number of adjacent edges to~$v$.}

We denote by   $\Gamma_E$ (resp.  $\Gamma_f$) the dual graph of the exceptional divisor $E$ (resp.  of the total transform of $f$).

{Different numerical decorations are attached to the vertices of  $\Gamma_E$ (or  $\Gamma_f$) for different purposes. For instance, let $g_v$ (resp. $e_v$) denote the genus of the irreducible component $E_v$ (resp. the opposite of the autointersection number $E_v^2$). Moreover, let  $\chi_v$ denote the number $2 - 2g_v -\delta_v$.}
	
The total transform of $f$ is a non reduced normal crossing divisor. The multiplicity $m_v$ of an irreducible component $E_v$ of $(f \circ \pi)^{-1}(0)$ is given by the order of vanishing of the pullback $\pi^\ast f$ of $f$ along $E_v$. Therefore the multiplicity $m_v$ is a positive integer. The collection of multiplicities $m_v$ is called the {\em numerical data} of the resolution of $f$, {and are sometimes used as decorations of the vertices of $\G_f$ too.}

	%\begin{remark}The graph manifold $M=f^{-1}(\partial D_\delta) \cap B_\epsilon$ is given by the dual graph of the resolution graph of the embedded resolution of $f$.
%	\end{remark}

	\begin{remark}\label{rem:neg_pos}According to Mumford \cite[II, (b), (ii)]{Mum} the positivity of the multiplicities can be alternatively derived from the negative definitiveness of the intersection matrix associated with the exceptional divisor $E$ of the embedded resolution $\pi$, due to Grauert, 	and the systems of linear equations $m_v \cdot E_v^2 + \sum_w m_w \cdot (E_v \cdot E_w)=0$, with unknowns the multiplicities $m_v$.
		%Mumford proved in \cite[II, (b), (ii)]{Mum} that the multiplicities $m_v$ associated to the total transform of $f$ are positive. Since the self-intersection numbers of the divisors are non-zero and the system of multiplicities satisfies $$m_v \cdot E_v^2 + \sum_w m_w(E_v \cdot E_w)=0,$$ we have that either all the multiplicities are $0$ or they are all strictly positive. So we may assume that they are all strictly positive.
%By a classical result by Grauert \cite{Grau}, the intersection matrix associated with $E$ is negative definite. In consequence, according to  \cite[Théorème 5.2 i) and iii)]{Pich},  one can assume that the multiplicities $m_i$ are strictly positive.% (this derives from Anne Pichon's theorem ).
	%We recall that the negative definiteness of the intersection matrix associated with the link of an isolated complex surface singularity (a classical result by Grauert \cite{Grau}) implies that each of the multiplicities $m_i$ can be assumed to be strictly positive (this derives from Anne Pichon's theorem \cite[Théorème 5.2 i) and iii)]{Pich}).
\end{remark}

%\textcolor{red}{
	%\begin{lemma}\label{lem:p}
	%	All the multiplicities $m_v$ are positive. 
%	\end{lemma}
%	\begin{proof}
%		Let $A$ be the intersection matrix on $\tilde{X}$, i.e., the matrix with  entries $A_{i,j} = E_i \cdot E_j$. For each exceptional divisor $E_i$, consider a curvetta, i.e., a germ of smooth curve on the resolution $\tilde{X}$, intersecting the divisor $E_i$ transversally at a smooth point of $E$, and the linear system 
%		$$A \cdot N^{(i)}= B_i,$$
%		where $B_i$ is the column vector whose entries are null except for the $i$-th entry which is $-1$. By Cramer's rule the solutions of the system are given by 
%		$$N^{(i)}_j= \frac{{\rm det}(A(j))}{{\rm det}(A)},$$
%		where $A(j)$ is the matrix resulting from replacing the $j$-th column of the matrix $A$ by the vector column $B_i$. To show that all solutions $N^{(i)}_j$ are positive is equivalent to show that 
%		$$(-1)^n={\rm sign( det}(A)= {\rm sign( det}(A(i)) = {\rm sign}((-1)^{i+j+1}{\rm det}(A[i,j]),$$
%		where $A[i,j]$ stands for the minor of $A$ resulting of erasing its $i$-th row and its $j$-th column. Therefore, all solutions $N^{(i)}_j$ are positive if and only if ${\rm sign}({\rm det}(A[i,j]))= (-1)^{n - 1 -i -j}$. \textcolor{blue}{(Es esto cierto si $A$ es definida negativa /. referencia???)}
%		The result follows because the vector $m= (m_v)$ is a positive linear combination of the solutions vectors $N^{(i)}$.
%\end{proof}}
	
% 	\subsubsection{Twist formula} 
	%\mbox{}
	
	Now we  recall the twist formula that allows to 
	compute the screw numbers  $\scn(h,\calC_\ast)$, introduced in \Cref{def:sc},  in terms of the decorated dual resolution graph $\G_f$. 
	
	The twist formula 
	comes from the analysis of the monodromy of a monomial {$w_i^{m_i}w_j^{m_j}$} and 
	has appeared several times in the literature with different forms.
	The original reference is \cite[Section 2]{N} but it is neither widely known, nor easy to find. Alternative or more recent references are \cite[Proposition 3.1]{DBM} or \cite[Lemma 10.3.7]{Wall}. Even if the original setting corresponds to germs of plane curves, since the statements are local in a resolution,
	they apply for our more general setting.
	
	%With the help of a resolution, the Milnor fiber can be decomposed into a collection of non necessarily connected pieces that are invariant under the action of the 	monodromy $h$. 
	
\begin{remark}\label{rem:milnor_fiber}
Let us briefly recall the construction of the Milnor fibre from the dual resolution graph $\G_f$. 

\begin{itemize}
\item For each vertex $v_i$, decorated with a multiplicity $m_i$, of the dual resolution graph $\G_f$ we consider a cyclic $m_i$-fold 
covering $F_i$ of the open subset $E^\circ_i := E_i \setminus \bigcup_{j \ne v} 
E_j$ of the divisor $E_i$; the topological type of this cyclic cover depends on the function~$f$ but sometimes (e.g. when $X$ is smooth) can be determined with combinatorial data.

\item  For each edge $e$ with end vertices $v_i$ and $v_j$ 
a cylindrical piece $c_e \times I \subset \mathbb{S}^1 \times \mathbb{S}^1 \times I$ given by 
$w_i^{m_i}w_j^{m_j}=1$ for suitable analytic coordinates at the point of $E_i\cap E_j$
associated to the edge $e$. Notice that $c_e$ consists of the disjoint union of 
$m_e:= {\rm gcd} (m_i, m_j)$ curves $C_r$, each one isomorphic to $\mathbb{S}^1$. 

\item A topological model of the Milnor fiber can be obtained gluing the pieces $E^\circ_i$ and $c_e \times I$ according to the adjacencies of $\Gamma_f$ and by means of plumbing operations. 
\end{itemize}
\end{remark}

%	For each vertex $V \in \mathcal{V}$ of the dual resolution graph $\G_f$ consider a cyclic $M_i$-fold 
%	covering $F_i$ of the open subset $E^\circ_i := E_i \setminus \bigcup_{j \ne i} 
%	E_j$ of the divisor $E_i$, and for an edge $E$ with end vertices $V_i$ and $V_j$ 
%	a cylindrical piece $C_E \times I \subset \mathbb{S}^1 \times \mathbb{S}^1 \times I$ given by 
%	$w_i^{M_i}w_j^{M_j}=1$ for suitable analytic coordinates at the point of $E_i\cap E_j$
%	associated to $E$. Notice that $C_E$ consists of the disjoint union of 
%	$M_E:= {\rm gcd} (M_i, M_j)$ curves $C_r$, each one isomorphic to $\mathbb{S}^1$.  

%Recall 	from Section \ref{Mfwfmt} that we considered a product neighborhood $N_C$ of 	$C^+$ in $F$. This product neighborhood is the union of the pieces $F_i$ 	corresponding to the arrowheads, and the pieces $C_E \times I$. 
	
{A tubular neighborhood of an invariant orbit $\calC$ is a set of annuli, and, according to the construction above,}  correspond to a bamboo with $k+1$ vertices in the dual resolution graph $\G_f$ (A \emph{bamboo} is a maximal linear subgraph). 
The bamboo starts at a vertex with %valency $\geq 3$ 
$\chi_v<0$ and ends at another such vertex; vertices~$v$ for which $\chi_v<0$ are called~\emph{nodes}. The rest of the vertices have valency $2$, {and genus 0, i.e., $\chi_v=0$}. Let $m_0, \ldots, m_k$ be the multiplicities associated to the monodromy on each of the vertices of the bamboo and let $d:=\gcd(m_0, m_1)=\gcd(m_i, m_{i+1})$. 

	\begin{prop}\label{tf}  The screw number  of the pseudo-periodic automorphism $h$ at an orbit  $\calC_\ast$ of curves is 
		\begin{equation}\label{twistf} \scn(h,\calC_\ast):=d^2 \sum_{i=0}^{k-1}\frac{1}{m_i m_{i+1}}.
		\end{equation}
		Observe that $d$ divides $\lcm(m_0, m_k)$ and so it divides any 
		multiple $e$ of $\lcm(m_0, m_k)$.
	\end{prop}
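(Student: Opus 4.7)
My plan is to derive \eqref{twistf} by decomposing a tubular neighborhood of a curve of the orbit $\calC_\ast$ along the bamboo, applying the additivity of screw numbers over that decomposition, and performing a local twist computation at each edge using the monomial model $w^{m_i}z^{m_{i+1}}$.

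First, I would set up the decomposition. By the plumbing construction recalled in \Cref{rem:milnor_fiber}, a tubular neighborhood $T_C$ of one curve $C\in\calC_\ast$ is a long cylinder obtained by concatenating, in the order dictated by the bamboo, one \emph{edge piece} at each intersection point $E_i\cap E_{i+1}$ — one of the $d=\gcd(m_i,m_{i+1})$ connected components of the local Milnor fiber $\{w^{m_i}z^{m_{i+1}}=1\}$ — together with one \emph{vertex piece} for each interior vertex $v_i$ with $1\le i\le k-1$ — one component of the $m_i$-fold cyclic cover $F_i$ of the twice-punctured sphere $E_i^{\circ}$. Because the bamboo has constant gcd $d$ along its edges, the orbit $\calC_\ast$ consists of exactly $d$ curves, each cover $F_i$ splits into $d$ annular components, and $h^d$ preserves $T_C$ and restricts there to a composition of self-maps of the listed pieces. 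By the additivity of screw numbers under such a composition (together with \Cref{lem:twist_number}), the screw number of $h$ on $T_C$ is the sum of the local contributions.

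Next I would observe that the interior-vertex pieces contribute nothing. Each such piece is an annular component of a cyclic cover of $E_i^{\circ}\cong\CC^{\ast}$, and the restriction of $h$ to it is a deck transformation of this cyclic cover, hence a pure rotation of an annulus; no Dehn twist is involved, so the corresponding screw number vanishes. Only the edge pieces remain to be computed. For this, fix local analytic coordinates at $E_i\cap E_{i+1}$ so that $f=w^az^b$ with $a=m_i$, $b=m_{i+1}$, $d=\gcd(a,b)$, $a=da'$ and $b=db'$; then the local Milnor fiber $\{w^az^b=1\}$ is the disjoint union of the $d$ cylinders $\{w^{a'}z^{b'}=\zeta\}$ as $\zeta$ runs over $d$-th roots of unity, and any lift of $t\mapsto e^{2\pi i s}t$ cycles them by $\zeta\mapsto e^{2\pi i/d}\zeta$. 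Hence $h^d$ preserves each such cylinder and acts on it through the real one-parameter subgroup $(w,z)\mapsto(w\,e^{2\pi i b'\tau},\,z\,e^{-2\pi i a'\tau})$ that stabilizes $w^{a'}z^{b'}$; comparing the rotation imposed on the two boundary circles and invoking \Cref{lem:twist_number} identifies this action with a fractional Dehn twist whose twist number is $d^2/(ab)$. Summing over the $k$ edges of the bamboo yields \eqref{twistf}. The final divisibility statement $d\mid\lcm(m_0,m_k)$ is immediate, since $d\mid m_i$ for every $i$ in the bamboo.

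The hard part will be this last local computation: pinning down the precise fractional twist at an edge demands careful bookkeeping of the orientations of the $d$ components relative to the ambient orientation of $F$, of the chosen lift of the Milnor monodromy, and of the identification of an \emph{algebraic} rotation with a topological Dehn-twist count. This is the classical monodromy calculation for a monomial map, carried out in \cite[Section 2]{N}, \cite[Proposition 3.1]{DBM}, and \cite[Lemma 10.3.7]{Wall}, and at worst one could quote those results directly.
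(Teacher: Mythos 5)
Your outline is correct, but it is worth noting that the paper does not actually argue this proposition at all: its ``proof'' consists of the single sentence deferring to \cite[Lemma 4.24]{Por}, with \cite[Section 2]{N}, \cite[Proposition 3.1]{DBM} and \cite[Lemma 10.3.7]{Wall} as classical alternatives. What you have written is essentially a reconstruction of the standard argument that underlies those references: decompose the long annulus $T_C$ along the bamboo into edge pieces (components of the local Milnor fiber of $w^{m_i}z^{m_{i+1}}$) and interior-vertex pieces (components of the cyclic cover of a twice-punctured sphere), observe that the vertex pieces are periodic and hence contribute no twist, use additivity of the fractional twist over a decomposition of an annulus into invariant sub-annuli (the same additivity the paper invokes in the proof of \cref{lem:twist_number}), and compute the per-edge contribution $d^2/(m_i m_{i+1})$ from the monomial model. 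Each step is sound, and the formula is consistent with \cref{def:sc}: for one edge, $n=\lcm(m_i,m_{i+1})=m_im_{i+1}/d$ and $\ell=1$ give $d\ell/n=d^2/(m_im_{i+1})$; one can also check it against the numerical values used in \cref{ex:ACampo1}. The trade-off is exactly the one you identify yourself: your route makes visible where the factor $d^2$ and the sum over edges come from, at the cost of the orientation and sign bookkeeping in the local computation (lifting the monodromy, orienting the $d$ components coherently, and converting an algebraic rotation into a Dehn-twist count), which is precisely the content of the cited lemmas and which you would in the end either have to carry out in full or quote, as the paper does.
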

	
	\begin{proof}
		As stated, the proposition is proved in \cite[Lemma 4.24]{Por} but equivalent formulae are classic and numerous in the literature see \cite[Section 2]{N}, \cite[Proposition 3.1]{DBM} or \cite[Lemma 10.3.7]{Wall}.
	\end{proof}

	Now, we can state the following result that is a particular case of \Cref{thm:def_pos}.

\begin{thm}\label{thm:posdefsi}
The form $\tilde{Q}$ associated with the geometric monodromy of a germ $f: (X,0) \rightarrow (\C, 0)$ is positive definite.  
\end{thm}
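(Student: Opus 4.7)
The plan is to reduce the statement to Theorem~\ref{thm:def_pos} by verifying its hypothesis, namely that every screw number attached to an orbit of non-nullhomotopic core curves of the canonical form of $h$ is strictly positive. Once that positivity is established, the conclusion is immediate from the main result of the previous section.

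First I would fix an embedded resolution $\pi:(\tilde X,E)\to(X,0)$ of $f$ and recall that, as explained in Remark~\ref{rem:milnor_fiber}, the annuli of the canonical form of the geometric monodromy $h:F\to F$ correspond to bamboos of the dual resolution graph $\G_f$ joining two nodes. For such a bamboo, with multiplicities $m_0,\dots,m_k$ and with $d=\gcd(m_i,m_{i+1})$, the twist formula (Proposition~\ref{tf}) gives
\[
\scn(h,\calC_\ast)=d^2\sum_{i=0}^{k-1}\frac{1}{m_i\,m_{i+1}}.
\]
So it suffices to show that each multiplicity $m_i$ appearing in this formula is a positive integer.

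The positivity of the $m_i$ is intrinsic: $m_v$ is by definition the order of vanishing of the pullback $\pi^*f$ along the component $E_v$, and since $f$ is a non-constant holomorphic germ, this order is a strictly positive integer for every exceptional component. Alternatively, as pointed out in Remark~\ref{rem:neg_pos}, the positivity can be deduced from the system $m_v\cdot E_v^2+\sum_{w}m_w(E_v\cdot E_w)=0$ together with the negative definiteness of the intersection matrix of~$E$ (Grauert's theorem combined with Mumford's observation). Either way, each factor in the denominators of the twist formula is a positive integer, so every summand is positive and $\scn(h,\calC_\ast)>0$.

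Finally I would invoke Theorem~\ref{thm:def_pos}: the hypothesis on non-nullhomotopic core curves is satisfied (in fact for \emph{all} orbits of $\calC$, since by the minimality convention on $\calC^+$ the separating curves of the canonical form of $h$ are essential, and the boundary-parallel curves carry positive screw numbers as well), hence $\tilde Q$ is positive definite. The only subtle step is the appeal to the twist formula and to Mumford's positivity argument; both are essentially black-boxed from the references, so I do not expect any real obstacle beyond recording the implication cleanly.
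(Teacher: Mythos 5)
Your proposal is correct and follows essentially the same route as the paper's own proof: positivity of the multiplicities $m_i$ (either intrinsically as orders of vanishing, or via Mumford's observation and the negative definiteness of the intersection matrix as in \cref{rem:neg_pos}), combined with the twist formula of \cref{tf}, verifies the hypothesis of \cref{thm:def_pos}, which then yields the conclusion. The only difference is that you spell out a few details the paper leaves implicit, such as the remark on boundary-parallel curves.
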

\begin{proof}The statement  follows from    the fact that the multiplicities $m_i$ are positive, the expression for the twist formula in  \Cref{tf}, and  \Cref{thm:def_pos}, see also \Cref{rem:neg_pos} to relate this result to the negative definiteness of the intersection matrix of the exceptional divisor $E$ of the embedded resolution $\pi$.\end{proof}

\subsection{Semistable reduction}
	\mbox{}

Next we recall the  notion of  semistable reduction of the germ $f:(X,0) \to (\C,0)$  of a reduced holomorphic map germ defined on an isolated complex surface singularity $(X,0)$. In contrast with an embedded resolution,  the semistable reduction has the advantage of associating with $f$ a reduced normal crossing divisor. Let us fix as $e$ the least common divisor of the multiplicities $m_v$ of the nodes; it coincides with the value~$e$ defined in page~\pageref{pag:e}.

\begin{definition} Let $f': (f\circ \pi)^{-1}(D_\delta^*)  \to D_\delta^*$ be the restriction of $f \circ \pi$ to $(f \circ \pi )^{-1}(D_\delta^*)$ and let $\sigma: D_{\delta^{1/e}}^* \to D_\delta^*$ be the base change map given by $t \mapsto t^e$. Consider the fibered product of the $X^{(e)} :=  (f\circ \pi)^{-1}(D_\delta^*) \times_{D_{\delta}^*} D_{\delta^{1/e}}^*$ maps $f'$ and $\sigma$, and the normalization $\tilde{X}^{(e)}$ of $X^{(e)}$ which is a $V$-manifold. The natural map $f^{(e)} : \tilde{X}{(e)} \to D_{\delta^{1/e}}^*$ is called the \emph{semistable reduction} of $f$. 
\end{definition}

	\begin{figure}[ht]
\begin{tikzcd}[/tikz/column 3/.append style={anchor=base west}]
\tilde{X}^{(e)}\ar[r]&{X}^{(e)}\ar[r]\ar[d]&(f\circ \pi)^{-1}(D_\delta^*)\ar[d,"f'", start anchor=-5.629cm]\\
&D_{\delta^{1/e}}^*\ar[r, "\sigma"] & D_\delta^*\\[-20pt]
%&t\ar[r,mapsto]&t^e
\end{tikzcd}
\caption{Construction of semistable reduction of $f$}
\end{figure}
		
The  preimage of $(f^{(e)})^{-1}(0)$  is a reduced divisor with $\mathbb{Q}$-normal crossings in $\tilde{X}^{(e)}$, see~\cite{jorge-Semistable}. We denote by $\gss$ the dual  graph of $(f^{(e)})^{-1}(0)$ starting from a minimal $\mathbb{Q}$-resolution. 

\begin{remark} For each node $v$ in the resolution graph $\G$, the semistable reduction graph $\gss$ has as many vertices as connected components of $F \cap E_v$, that is, as many vertices as pieces of the Milnor fiber lie in the corresponding circle bundle. In fact, we can mimic the construction of the Milnor fiber of \Cref{rem:milnor_fiber} but,  in this case, no cyclic cover is taken into account.
\end{remark}

Alternatively, following J.~Mart{\'i}n-Morales, see \cite{jorge-Semistable}, one can substitute the embedded resolution $\pi$ with a so-called $\mathbb{Q}$-resolution (i.e. the total space admits abelian quotient singularities and the divisor has $\mathbb{Q}$-normal crossings) for which the semistable reduction
is simpler. We use this modification in  \cref{sec:ex}. %We denote by $\gss$ the dual  graph of $(f^{(e)})^{-1}(0)$ starting from a minimal $\mathbb{Q}$-resolution. % the semistable reduction consists of a vertex for each irreducible component of of $(f^{(e)})^{-1}(0)$ and an edge between two (not necessarily different) vertices if the corresponding irreducible components of intersect (or have autointersections). 

	\subsection{Graph manifolds}
	\mbox{}

The dual graphs $\G_E, \G_f$, and $\gss$ introduced in the previous subsection are examples of {\rm plumbing graphs}. Next we recall the notions of plumbing graphs and graph manifolds.
	 
	\begin{definition}
	A  {\em plumbing graph} $\G$ consists of the following  data.
	
	\begin{enumerate}[label=\rm(\roman{enumi})]
		\item A set of vertices $\calV$.
		\item A set of arrowheads $\calD$.
		\item A set of edges $\calE$ connecting vertices with vertices or with arrowheads. Each edge connects two vertices which are allowed to be the same, or it connects a vertex and an arrowhead. An arrowhead is connected to exactly one vertex and there is no restriction on the number of vertices and arrowheads a given vertex is connected to.
		\item For each $v \in \calV$ an ordered couple of integers $(e_v,g_v)$ where $g$ is nonnegative.
	\end{enumerate}
	\end{definition}
	
	From the above piece of combinatorial data we can construct a $3$ manifold as follows.
	
	\begin{enumerate}[label=\alph*)]
		\item For each vertex $v \in \calV$, let $E_v$ be the circle bundle with Euler number $e_v$ over the closed surface of genus $g_v$.
		\item For each edge connecting two vertices $v,u \in \calV$ do the following: Pick a trivializing open disk on the base of each circle bundle $E_v$ and $E_u$. Remove the open solid torus lying over each of the disks. And finally, glue the two boundary tori identifying base circles of one with fiber circles of the other (and viceversa).
		\item For each edge connecting a vertex $v\in \calV$ and an arrowhead $d\in\calD$ 
		%either 
		pick a trivializing open disk on the base of $E_v$ and remove the open solid torus lying over the disk.%
		%or mark one fiber of $E_v$
	\end{enumerate}

	\begin{definition}
		We say that a $3$-dimensional manifold $M$ is a \emph{graph manifold} if it is diffeomorphic to a $3$-manifold constructed as above. We always assume that $M$ is oriented. 
	\end{definition}

\begin{example}\mbox{}

\begin{itemize}
\item The 3-dimensional manifold $M:=f^{-1}(\partial D_\delta) \cap B_\epsilon \cap X $ is a graph manifold corresponding to the plumbing graph $\G_E$. % is given by the resolution graph of any good resolution of~$X$. 
Notice that $\G_E$ graph has no arrows. 

\item The 3-dimensional manifold consisting of {the complement of} an open regular neighborhood of the link of $f$ in $M$ is a graph manifold corresponding to the plumbing graph $\G_f$. Notice that arrows of $f$ correspond to the connected components of the strict transform of $f$. Notice that in this case the numerical data, also called a {\em system of multiplicities},  can be recovered in the following topological way. A fiber of $E_v$ is isotopic to the boundary of a germ of curve $\calC_v$ in $X$. The multiplicity $m_v \in \ZZ$ associated to $f$ is the oriented intersection number of $f^{-1}(t)$ with $\calC_v$. 
%Let $f:X \to \CC$ a reduced holomorphic map germ and consider an embedded resolution of~$f$, i.e. a resolution of $X$ such that  the total transform of $f$ is a simple normal crossing divisor. The dual graph of this resolution together with arrows for each connected component of the strict transform of $f$ provides a graph manifold structure of the complement of an open regular neighborhood of the link of~$f$ in~$M$.
 \end{itemize} \end{example}
 
 The following result is a reformulation of \Cref{cor:even} in terms of the plumbing graph. 
 
 \begin{cor}\label{cor:even2}The form $\tilde Q$ restricted to $H_1(F, \mathbb{Z})/ \ker N'$ is even, if the plumbing graph $\Gamma_f$ is a tree. In particular,  $\tilde Q$ restricted to $H_1(F, \mathbb{Z})/ \ker N'$ is even in the case of plane curve singularities $f:(\C^2,0) \to (\C,0)$.  
 \end{cor}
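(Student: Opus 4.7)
The plan is to reduce \Cref{cor:even2} to \Cref{cor:even} by transferring the hypothesis from the Nielsen-Thurston graph $G(h)$ to the plumbing graph $\Gamma_f$. First, I would invoke the homotopy equivalence $G(h)\simeq \gss$ between the Nielsen-Thurston graph of the geometric monodromy and the dual graph of the semistable reduction stated in \Cref{lem:homtopy_equivalence_graphs}. This equivalence is $h$-equivariant: the action of $h$ on $G(h)$ corresponds, under the identification, to the cyclic action of order $e$ on $\gss$ coming from the deck transformations of the base change $t\mapsto t^e$ used to build the semistable reduction.

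Next, I would identify the quotient $\gss/\langle h\rangle$ with $\Gamma_f$ up to homotopy. The projection $\tilde{X}^{(e)}\to \tilde{X}$ collapses each $\langle h\rangle$-orbit of irreducible components of the central fibre of the semistable model to a single irreducible component of $(f\circ\pi)^{-1}(0)$, and similarly for the intersections. Hence the quotient graph has the same combinatorial skeleton as $\Gamma_f$, and in particular $H_1(G(h)/\langle h\rangle;\ZZ)\cong H_1(\Gamma_f;\ZZ)$. If $\Gamma_f$ is a tree, this first homology vanishes, so the quotient $G(h)/\langle h\rangle$ is also a tree and \Cref{cor:even} applies: the form $Q'$ on $H_1(F;\ZZ)$ is even. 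Since for absolute classes $v$ one has $N(v)=N'(v)$, the natural map $H_1(F;\ZZ)/\ker N'\to H_1(F,\partial F;\ZZ)/\ker N$ is well defined, and it intertwines the induced form of $Q'$ with the restriction of $\tilde Q$, so evenness transfers and the first part of the statement follows.

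For the ``in particular'' clause, I would use that any embedded resolution $\pi\colon \tilde X\to\CC^2$ of a plane curve singularity is a composition of point blow-ups of smooth surfaces; each blow-up introduces a single rational exceptional component meeting the previous divisor in at most two points, so $\Gamma_f$ is always a tree.

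The main obstacle is the identification of $\gss/\langle h\rangle$ with $\Gamma_f$ at the level of first homology. The cyclic action on $\gss$ can permute both vertices and edges nontrivially, with multiplicities $m_v$ governing the sizes of the orbits, so one must check that the quotient does not introduce new cycles. This reduces to verifying that over each vertex (resp. edge) of $\Gamma_f$ the corresponding orbit of vertices (resp. edges) of $\gss$ is connected in the appropriate sense, which follows from the structure of the semistable reduction as a cyclic cover branched along the total transform. Once this is in place, the chain of reductions above is immediate.
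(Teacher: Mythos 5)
Your proposal is correct and follows essentially the same route as the paper: the paper's (very terse) proof likewise reduces \Cref{cor:even2} to \Cref{cor:even} by identifying the tree condition on $\Gamma_f$ with the tree condition on the quotient of the Nielsen-Thurston graph, and then notes that $\Gamma_f$ is always a tree for plane curves. You simply supply the details (the $h$-equivariant identification $G(h)\simeq\gss$ from \Cref{lem:homtopy_equivalence_graphs}, the homotopy identification of $\gss/\langle h\rangle$ with $\Gamma_f$, and the compatibility of $Q'$ with the restriction of $\tilde Q$, which the paper establishes separately in \Cref{sec:ex}) that the paper leaves implicit.
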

 \begin{proof}Under the hypothesis of \Cref{cor:even} the plumbing graph $\Gamma_f$ is a tree. This is always the case of plane curve singularities \end{proof}
 
 \begin{remark}\Cref{cor:even} and \Cref{cor:even2} are to be compared with the example in \Cref{siex}.\end{remark}
 %\textcolor{red}{A\~nadir que en el caso $X=\C^2$ tenemos que $\G_f$ y en consecuencia la restriccion de $\tilde{Q}$ es par}

%The following lemma describes the graph manifold \textcolor{red}{???} corresponding to the plumbing graph $\G_{ss}$ by relating it to  the Nielsen-Thurston graph associated with $h$, and to the plumbing graph associated with $h^e$. 

The following lemma  {relates the semistable dual graph $\gss$,  the plumbing graph associated with~$h^e$, and the Nielsen-Thurston graph associated to~$h$. }

% \begin{figure}[ht]
% \begin{tikzcd}
% (Z,E)\ar[r,"\nu"]&(Y,D')\ar[r]\ar[d]&(X,D)\ar[d,"f"]\\
% &(D_{\delta^{1/e}},0)\ar[r] & (D_\delta,0)\\[-20pt]
% &t\ar[r,mapsto]&t^e
% \end{tikzcd}
% \caption{The map $\nu$ is the normalization and $Y$ is the pull-back; the map $D'\to D$ is an isomorphism
% and $E$ is a $\mathbb{Q}$-normal crossing divisor.}
% \end{figure}

	\begin{lemma}\label{lem:homtopy_equivalence_graphs}
		Let $f: (X,0) \to (\C,0)$ define a germ of curve singularity on $(X,0)$, let $h: F \to F$ be the geometric monodromy associated with $f$ and let $e\in \NN$ be the smallest natural number such that $h^e$ is a composition of right-handed Dehn twists around disjoint simple closed curves. Then the following graphs have the same homotopy type (when seen as $1$-dimensional CW-complexes):
		\begin{enumerate}[label=\rm(\roman{enumi})]
			\item \label{it:i} The dual graph $\gss$ of the semistable reduction.
			\item \label{it:ii} The plumbing graph associated to the mapping torus of $h^e$.
			\item \label{it:iii} The Nielsen-Thurston graph associated to $h$.
		\end{enumerate}
	\end{lemma}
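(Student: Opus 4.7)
The plan is to prove the chain of equivalences \ref{it:iii} $\simeq$ \ref{it:ii} $\simeq$ \ref{it:i} by identifying each of the three graphs with the combinatorial dual of a common geometric decomposition of the mapping torus of~$h^e$.

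First I would handle \ref{it:iii} $\simeq$ \ref{it:ii}. By \Cref{thm:can_form}\ref{ref:iii}, $h^e$ restricts to the identity on each periodic piece $\Sigma_v$ of the Nielsen-Thurston decomposition and to a non-trivial power of a right-handed Dehn twist on each annulus $T_C$, $C\in\calC$. Consequently the mapping torus $M_{h^e}$ admits a natural JSJ-type decomposition along the tori $\{C\times S^1:C\in\calC\}$, with one trivial circle bundle $\Sigma_v\times S^1$ per periodic piece and one Seifert piece per separating curve (the mapping torus of a Dehn twist on an annulus). The plumbing-graph construction recalled in \Cref{sec:applst} assigns a vertex to each circle bundle over a closed surface (namely the one obtained from $\Sigma_v$ by capping off the boundary tori), and assigns edges to the annular plumbings between them. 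The resulting plumbing graph therefore has one vertex per periodic piece and one edge per $C\in\calC$, matching exactly the combinatorics of $G(h)$ given in \Cref{def:nielsen_graph}; any bamboo introduced by a higher Dehn twist power is contractible, so the identification holds at the level of homotopy type.

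Next I would handle \ref{it:ii} $\simeq$ \ref{it:i}. The base change $\sigma:t\mapsto t^e$ replaces the original monodromy $h$ with $h^e$, and the normalization $\tilde{X}^{(e)}$ is a $V$-manifold on which the central fibre $(f^{(e)})^{-1}(0)$ is a reduced $\mathbb{Q}$-normal-crossings divisor. The boundary of a small tubular neighborhood of this central fibre is, on the one hand, homeomorphic to the mapping torus $M_{h^e}$ (since the link of the central fibre of a degeneration with smooth nearby fibers is the mapping torus of the monodromy), and on the other hand a graph manifold whose associated plumbing graph is by construction $\gss$. Hence the plumbing graph of $M_{h^e}$ coincides with $\gss$.

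The main obstacle, and the step that requires genuine bookkeeping, is to verify that these identifications are compatible at the level of vertices and edges. Using \Cref{rem:milnor_fiber}: each node $v$ of the dual resolution graph contributes an $m_v$-fold cyclic cover $F_v$ of $E_v^\circ$; since $m_v\mid e$, this cover trivialises after the $e$-fold base change into the connected components of $F\cap E_v$, which become distinct irreducible components of the reduced central fibre in $\tilde{X}^{(e)}$. By the remark immediately preceding the lemma these components are in bijection with the periodic pieces of $h$, hence with the vertices of $G(h)$. Similarly, each resolution edge with adjacent multiplicities $(m_i,m_j)$ contributes $\gcd(m_i,m_j)$ annular gluings, which after semistable reduction become that many transverse intersection points and thus that many edges of $\gss$, matching one-to-one with the orbits of separating curves in $\calC$ and hence with the edges of $G(h)$. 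Threading these identifications through the chain \ref{it:iii} $\simeq$ \ref{it:ii} $\simeq$ \ref{it:i} gives the claimed homotopy equivalence.
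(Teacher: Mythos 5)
Your proposal is correct and follows essentially the same route as the paper: both identify the three graphs through the mapping torus of $h^e$, matching periodic pieces to vertices and decomposition curves (or the corresponding plumbing tori/bamboos, which are contractible) to edges, and both use the base change $t\mapsto t^e$ to relate this to the reduced central fibre of the semistable reduction. Your third paragraph supplies bookkeeping (via the cyclic covers of \Cref{rem:milnor_fiber}) that the paper's proof leaves implicit, but the underlying argument is the same.
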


\begin{proof}
	%A way of computing the semistable reduction is as follows. First take a base change of order $e$ of the locally trivial fibration $$f^{-1}(D_\delta^*) \cap B_\epsilon \to D_\delta^*.$$ 
	After the base change $\sigma: D_{\delta^{1/e}} \to D_\delta$,  given by $t \mapsto t^e$ and used to the construction of the semistable reduction, the monodromy $h$ becomes the homeomorphism $h^e$ which is also pseudo-periodic. One can now find the central fiber $(f^{(e)})^{-1}(0)$  of the semistable reduction by looking at the mapping torus of its monodromy and computing the corresponding plumbing graph. To get to the dual graph of the semistable reduction, one may blow-down the rational curves of self-intersection $-1$ that intersect the other curves in at most $2$ points, and this process clearly does not change the homotopy type of the graph. Actually, this is the equivalent to reduce the plumbing graph to its minimal form. This covers the equivalence of \ref{it:i}
	and \ref{it:ii}.
	
	To establish the equivalence with the previous two graphs with \ref{it:iii} we just observe that $h^e$ is the identity outside a tubular neighborhood of the curves that gives the Nielsen decomposition of~$h$. This tells us that its Nielsen-Thurston graph has as many vertices as nodes has the plumbing graph associated to the mapping torus of $h^e$. Also, each curve of the Nielsen decomposition of $h^e$ is an orbit in itself so there is a bijection between the tori in the graph manifold structure of the plumbing graph associated to the mapping torus of $h^e$ and the curves in the Nielsen decomposition of $h^e$. So this establishes the equivalence between  \ref{it:ii} and \ref{it:iii}. 
\end{proof}

\begin{remark}
In fact, as we have taken as $\gss$ the dual graph of the semistable reduction coming from a minimal $\mathbb{Q}$-resolution, this graph is actually equal to the Nielsen-Thurston graph associated to~$h$.
\end{remark}

\subsection{Algebraic invariants of the monodromy}\label{subsec:alg_inv_mon}
\mbox{}

Now we explain how the  characteristic polynomial $\Delta(t)$ of $h_\ast$ on $H_1(F)$ and the 
	characteristic polynomial $\Delta_2(t)$ of   $h_\ast$ on $H_1(F)/ \ker 
	(h^e_\ast - {\rm Id})$ can be expressed in terms of the decorated dual resolution graph
	$\Gamma_f$, when $M$ is a rational homology sphere.

%Assume now that we have a fibered link with fiber $F$ in a $3$-dimensional homology sphere whose complement is a graph manifold. This situation is particularly nice and there exist formul{\ae} in the literature that compute the  characteristic polynomial $\Delta(t)$ of $h_\ast$ on $H_1(F)$ and the 	characteristic polynomial $\Delta_2(t)$ of   $h_\ast$ on $H_1(F)/ \ker 	(h^e_\ast - {\rm Id})$ can be expressed in terms of the decorated dual resolution graph $\Gamma_f$.
	
	%In general, for a graph, the \emph{valency} $\delta_v$ of a vertex~$v$ is
	%the number of adjacent edges to~$v$. Let us denote as $\chi_v:=2-2 g_v-\delta_v$;
Since $M$ is assumed to be a rational homology sphere, the $g_v$ is  zero for each vertex $v$.
We recall that a vertex $v\in\calV$ is a {{\em node}}  if 
$\chi_v<0$ %(or $\delta_v=0$ and $g_v=1$) %$\delta_v \geq 3$ or $g_v>0$ 
and we say that it is a {{\em separating node}}  if at least 
two of the components of $\Gamma \setminus \{v\}$ contain arrows. Let 
$\mathcal{V}'$ be the set of  {separating nodes}. For each $e \in 
	\mathcal{E}$ denote by $m_e$ be the greatest common divisor of the 
	multiplicities of the two vertices at the ends of $e$. Moreover, let 
	$\mathcal{E}'$ be the set formed by a choice of one edge for each path or chain 
	joining two  {separating nodes}. Finally, let $d$ the greatest common divisor of the multiplicities of the arrowheads of $\G_f$. 
	The aforementioned characteristic 
	polynomials are given by the following expressions
	\begin{gather*}
	\Delta(t)= (t^d-1) \prod_{v \in \mathcal{V}} (t^{m_v}-1)^{-\chi_v}\ \text{\cite[Theorem 11.3]{en}},\\
	\Delta_2(t)= (t^d-1) \frac{\prod_{e \in \mathcal{E}'} (t^{m_e}-1)}{\prod_{v 
			\in \mathcal{V}'} (t^{m_v}-1)}\ \text{\cite[Theorem 14.1]{en}}.
	\end{gather*}

	\begin{remark}	In particular, it is not 
	difficult to establish that the number of Jordan blocks of $h_\ast$ is given by 
	the first Betti number $b_1(\gss )$, see \cref{sec:ex} and  \Cref{re:inducv}. 	A further reference
is the main result of \cite{G} for generalizations to higher dimensions. \end{remark}

	\section{Explicit computations and examples}\label{sec:ex}
	
Let $F$ be the fiber of a locally trivial fibration of a graph manifold $M$ fibered over $\MS^1$ such that $F$ is transverse to all the circle fibers of the circle bundles that form $M$ and such that the monodromy of the locally trivial fibration is pseudo-periodic.
	Let us denote $\calC^+=\calC \cup \partial F$, and let $F_\calC$ be the closed complement of a 
	tubular neighborhood $N_\calC$ of $\calC^+$ in $F$. One can consider the long exact 
	sequence of the pair $(F,F_\calC)$
	\begin{equation}\label{les}
	0 \rightarrow H_1(\calC) \rightarrow H_1(F_\calC) \rightarrow H_1(F) \rightarrow H_0(\calC) 
	\rightarrow H_0(F_\calC) \rightarrow H_0(F) \rightarrow 0,
	\end{equation}
applying the Excision Formula to $\mathring{F}_\calC$ and taking into account that for an annulus $T$
with boundary components $C_1,C_2$, we have $H_\bullet(T,C_i)=0$ and $H_\bullet(T,\partial T)\cong \tilde{H}_{\bullet-1}(C_i)$.
Additionally one can define the increasing weight filtration
\[
W_0:= \im  (H_1(\calC^+) \rightarrow H_1(F)) \subset W_1:= \im  (H_1(F_\calC)\rightarrow H_1(F)) \subset
 W_2:=H_1(F).
\]
		%Due to the above long exact sequence, the following general result holds.

%\begin{thm}[Monodromy Theorem {\cite{NS}}] \label{thm:monodromy_thm} 
%		The monodromy operator $h_\ast$ on $H_\bullet(F)$ is quasi unipotent, 		i.e., there exist positive numbers $e$ and $n$ such that 		\begin{equation}(h^e_\ast - {\rm id})^{n}=0,\end{equation}		in particular, all the eigenvalues of $h_\ast$ are roots of unity. 		Moreover%, in the case of plane curves, all 		Jordan blocks of $h_\ast$ on $H_1(F)$ have size $2$.\end{thm}
Recall that $W_1 \subset \ker N'$ because $F_C$ is a union of periodic parts of the monodromy. Using the twist formula \Cref{tf}, one can deduce that $W_1 = \ker N'$, see \cite[Theorem 10.3.8\ii]{N} {for the case of plane curves and notice that the argument also works in general}.

As a direct consequence from the long exact seqence of the pair $(F,\partial F)$, we obtain that $H_1(F; \ZZ)/\ker N'$ can be naturally identified as a subgroup of 
$H_1(F,\partial F; \ZZ)/\ker N$. If we denote as $\tilde{Q}'$ the bilinear map induced by $Q'$ on $H_1(F; \ZZ)/\ker N'$ it follows from the very definition that 
$\tilde{Q}'$ is the restriction of $\tilde{Q}$ to $H_1(F; \ZZ)/\ker N'$.
	
Using the equivalence between the dual graph $\gss $ of the semistable reduction and the Nielsen-Thurston graph established in \Cref{lem:homtopy_equivalence_graphs}, we are going to reduce the computation of the quadratic form $\tilde{Q}$ on $H_1(F, \partial F; \ZZ)/\ker N$ (resp. its restriction to $H_1(F; \ZZ)/\ker N'$) to a computation on $H_1(\gss , \calD; \ZZ)$ the first relative homology group of 1-chains of the dual graph of the semistable reduction relative to the set of its arrows $\calD$ (resp.  $H_1(\gss ; \ZZ)$)\label{calD}.

Firstly, we identify the group 
$H_1(F, F_C)$ with the group $C_1(\gss )$,  and with the group $H_1(\gss , \Vss)$, where $\Vss$ denotes the set of nodes of $\gss$. We also identify $H_0(F_C; \ZZ)$ with the group $C_0(\gss )$. Secondly, the map $H_1(F, F_C; \ZZ) \rightarrow H_0(F_C; \ZZ)$ from \eqref{les} is identified with the boundary map $\delta_1 : C_1(\gss ) \rightarrow C_0(\gss )$.
	
From \eqref{les}, we get the short exact sequence
\[
0 \rightarrow W_1=\ker N' \rightarrow W_2=H_1(F; \ZZ) \rightarrow \ker(C_1(\gss ) \rightarrow C_0(\gss )) \rightarrow 0,
\]
and the isomorphism 
\[
\frac{H_1(F; \ZZ)}{\ker N'}=\frac{W_2}{W_1} \rightarrow H_1(\gss ).
\]
	Analogously, %the relative homology group 
	$H_1(F, \partial F; \ZZ)/\ker N$ is isomorphic to $H_1(\gss , \calD; \ZZ)$. Therefore, we can consider $\tilde{Q}$ as a form defined on  $H_1(\gss , \calD; \ZZ) \times H_1(\gss , \calD; \ZZ)$.

	To perform particular computations, we use that the image in $H_1(\gss , \calD; \ZZ)$ of the intervals $I_{i,b}$,  introduced in the proof of \Cref{thm:def_pos}, are supported on bamboos of $\gss $ connecting two nodes or a node and an arrow.

	\medskip
	
	Let us compute some examples. The reader can check these computations, and find supplementary details in the  link \href{https://github.com/enriqueartal/QuadraticFormSingularity}{https://github.com/enriqueartal/QuadraticFormSingularity} using 
	\texttt{Sagemath}, eventually within \texttt{Binder}.
	
	\medskip
	
	\Cref{ex:ACampo1} and \Cref{ex:ACampo2} are intended to illustrate the method of computation.
	
% 	We recall that the number $e$ was defined in \Cref{def:nilpotent_N} as the {\em smallest natural number} such that $h^e$ restricts to each periodic piece in its Nielsen-Thurston decomposition as the identity. For convenience shake, in  the examples below we sometimes use a multiple $e'$ of $e$ instead of $e$ itself. Remark that, in such cases,  the form $Q$ can be recovered by dividing the entries of its corresponding matrix by $\frac{e'}{e}$. If we  use $e'$ insead of $e$, we make a small remark at the end of the corresponding example. %explaining how to obtain the actual quadratic form $Q$ from the one computed.
	 
	\begin{example}\label{ex:ACampo1}
		The A'Campo double $(7,6)$-cusp $f=(x^6+y^7)(x^7+y^6)$ defines a singular point of a plane curve with Milnor number $\mu=131$, and its characteristic polynomials are
\[
		\Delta(t)=\frac{(t-1)(t^{78}-1)^2}{(t^{13}-1)^2}, \text{ and } \Delta_2(t)=\frac{t^6-1}{t-1},
\]
see \Cref{fig:2branches67} for the dual resolution graph and the Nielsen-Thurston graph.
		
		\begin{figure}[ht]
			\begin{center}
							\begin{tikzpicture}[vertice/.style={draw,circle,fill,minimum size=0.2cm,inner sep=0}]
				\coordinate (U) at (0,0);
				\coordinate (A1) at (1,0);
				\coordinate (A2) at (2,0);
				\coordinate (A3) at (3,0);
				\coordinate (A4) at (4,0);
				\coordinate (A5) at (5,0);
				\coordinate (A6) at (6,0);
				\coordinate (B1) at (-1,0);
				\coordinate (B2) at (-2,0);
				\coordinate (B3) at (-3,0);
				\coordinate (B4) at (-4,0);
				\coordinate (B5) at (-5,0);
				\coordinate (B6) at (-6,0);
				\coordinate (F1) at (1,0.75);
				\coordinate (F2) at (-1,0.75);

				\node[vertice] at (U) {};
				\node[below] at (U) {$12$};
				\node[vertice] at (A1) {};
				\node[below] at (A1) {$78$};
				\node[vertice] at (B1) {};
				\node[below] at (B1) {$78$};
				\node[vertice] at (A2) {};
				\node[below] at (A2) {$65$};
				\node[vertice] at (A3) {};
				\node[below] at (A3) {$52$};
				\node[vertice] at (A4) {};
				\node[below] at (A4) {$39$};
				\node[vertice] at (A5) {};
				\node[below] at (A5) {$26$};
				\node[vertice] at (A6) {};
				\node[below] at (A6) {$13$};
				\node[vertice] at (A1) {};
				\node[below] at (A1) {$78$};
				\node[vertice] at (B1) {};
				\node[below] at (B1) {$78$};
				\node[vertice] at (B2) {};
				\node[below] at (B2) {$65$};
				\node[vertice] at (B3) {};
				\node[below] at (B3) {$52$};
				\node[vertice] at (B4) {};
				\node[below] at (B4) {$39$};
				\node[vertice] at (B5) {};
				\node[below] at (B5) {$26$};
				\node[vertice] at (B6) {};
				\node[below] at (B6) {$13$};
				
				\draw (U)--(A1)--(A2)--(A3)--(A4)--(A5)--(A6);
				\draw (U)--(B1)--(B2)--(B4)--(B4)--(B5)--(B6);
				\draw[->] (A1)--(F1);
				\draw[->] (B1)--(F2);
				
				\coordinate (C1) at (0,-3.75);
				\coordinate (C2) at (0,-3.25);
				\coordinate (C3) at (0,-2.75);
				\coordinate (C4) at (0,-2.25);
				\coordinate (C5) at (0,-1.75);
				\coordinate (C6) at (0,-1.25);
				\coordinate (D1) at (1,-2.5);
				\coordinate (D2) at (-1,-2.5);
				\coordinate (G1) at (1,-1.5);
				\coordinate (G2) at (-1,-1.5);
				\draw[->] (D2)to[out=-75,in=180] node[pos=1,below] {$p_{6}$} (C1) ;
				\draw (C1) to[out=0,in=-105](D1); 
				\draw (D2)to[out=-45,in=180](C2)to[out=0,in=45](D1);
				\draw (D2)to[out=-15,in=180](C3)to[out=0,in=15](D1);
				\draw (D2)to[out=15,in=180](C4)to[out=0,in=-15](D1);
				\draw (D2)to[out=55,in=180](C5)to[out=0,in=-45](D1);
				\draw[->] (D2)to[out=75,in=180] node[pos=1,above] {$p_{1}$} (C6) ;
				\draw (C6) to[out=0,in=105](D1); 
				\node[vertice] at (D1) {};
				\node[right] at (D1) {$g=30$};
				\node[below=3pt] at (D1) {$b$};
				\node[vertice] at (D2) {};
				\node[left] at (D2) {$g=30$};
				\node[below=3pt] at (D2) {$a$};
				\draw[->] (D1)--(G1) node[right, pos=.7] {$d_r$};
				\draw[->] (D2)--(G2) node[left, pos=.7] {$d_l$};
				\end{tikzpicture}
				\caption{Dual graph of the embedded resolution of $f$, and dual graph of its semistable reduction from a minimal $\mathbb{Q}$-resolution.}
				\label{fig:2branches67}
			\end{center}
		\end{figure}
		
		Let us fix  basis of $H_1(\gss )$ and $H_1(\gss , \calD)$. Let us label the leftmost (resp. rightmost) vertex of  the Nielsen-Thurston graph by $a$ (resp. $b$).
		%and label the 6 central vertex by $v_i$, with $i=1, \dots , 6$ from top to botton. 
		Label $p_1,\dots,p_6$ (top to bottom) the oriented edges (from $a$ to $b$) 
% 		Let us denote the oriented edge $av_i$ by $p_{2i-1}$, and the oriented edge $v_ib$ by~$p_{2i}$. 
% 		Denote the left (resp. right) arrow by $d_l$ (resp. $d_r$) with its natural orientation. 
The set 
		\[
% 		\{\sigma_i= p_{2i-1}+p_{2i}-p_{2(i+1)}-p_{2i+1} \, : \, i=1, \dots , 5\} 
		\{\sigma_i= p_{i}-p_{i+1} \, \mid \, i=1, \dots , 5\} 
		\]
		is a basis of $H_1(\gss )$. A basis of $H_1(\gss , \calD)$ is obtained by attaching to the former set the element  $\sigma_6= -d_l +p_1+d_r$.

		Let us show that the quadratic form is represented by the following  matrix (see also \url{https://github.com/enriqueartal/QuadraticFormSingularity}).
\[
\left ( \begin{array}{rrrrr|r}
2 & -1 & 0 & 0 & 0 & 1 \\
-1 & 2 & -1 & 0 & 0 & 0 \\
0 & -1 & 2 & -1 & 0 & 0 \\
0 & 0 & -1 & 2 & -1 & 0 \\
0 & 0 & 0 & -1 & 2 & 0 \\\hline
1 & 0 & 0 & 0 & 0 & 3		\end{array} \right ).
\]
		Remark that it is enough to compute the $(i, j)$-entries  with $j = i-1,i, i+1$, and the $(1, 6)$-entry.

		In this case we take $e=78$, and there are three orbits of curves or annuli:
		\begin{itemize}
			\item The orbits $\calC_l$ (resp. $\calC_r$) corresponding to the 
			left arrow $d_l$ (resp. $d_r$) with $d=1$. Denote by $C_\ast$ (resp. $T_\ast$) for $\ast = l,r$ 
			the unique curve (resp. annulus) of the orbit. Assume that the orientation of $C_\ast$ is determined by 
			$d_\ast$  and \Cref{lem:twist_number}\ref{lem:twist_number_i}. 
			By \Cref{tf}, $\scn(h,\calC_\ast)=1/78$, and using \Cref{lem:twist_number}\ref{lem:twist_number_i} 
			%$\scn(h^e, T_\ast)=2$.
			$\scn(h^e, T_\ast)=1$.
			
			\item The orbit $\calC$ corresponding  to the bamboo of three vertices separating the two arrows of the dual resolution graph $\Gamma$ with  $d=6$. 
			Denote by $C_i$ (resp. $T_i$), with $i=1, \dots, 6$, the components of $\calC$ (resp. 
			of the orbit $\calT$ of annuli associated to $\calC$). Assume that the orientations of the curve $C_i$ 
			are determined by $\sigma_i$ and \Cref{lem:twist_number}\ref{lem:twist_number_i}.  
			As in the previous case  $\scn(h, \calC)= 1/13$, and $\scn(h^e,T_i)=1$. %$\scn(h^e,T_i)=2$.   
		\end{itemize} 
		
		For any of the above annulus $T$, let us denote the boundary component of $T$ closer to the boundary component of $F$ corresponding to $d_l$ by $\partial_1 T$, and the other boundary component by $\partial_2T$.

		The basis element $\sigma_i$ lifts to a simple closed curve $\gamma_i$ that intersect the annulus $T_i$ (resp. $T_{i+1}$) in a single oriented interval $I_{i} \subset I_i^+$ (resp. $I_{i+1} \subset I_{i+1}^-$). Clearly,
		\[
		Q(\gamma_i, \gamma_i)=\langle %2(
		C_i+C_{i+1}%)
		, I_{i}+I_{i+1}\rangle =2.%4.
		\]
		The curve $\gamma_{i-1}$ intersects the annulus $T_i$ in an interval $J_i  \subset I_i^-$ and does not intersect $T_{i+1}$. 
		The basis element $\gamma_{i+1}$ intersects the annulus $T_{i+1}$ in an interval $J_{i+1}  \subset I_i^+$ and does not intersect~$T_{i}$.   Hence,
		\[
		Q(\gamma_i, \gamma_k) = \langle %2(
		C_i+C_{i+1}%)
		, J_{k}\rangle  = -%2
		1,\qquad k=i-1, i+1.
		\]
		Finally, the basis element $\gamma_{6}$ intersects the annuli $T_l$, $T_1$, and $T_r$ in intervals $K_l \subset I^+_l, K_1 \subset I^+_1$, and $K_r \subset I^+_r$. Hence,
		\[
		Q(\gamma_6, \gamma_1) =\langle %2(
		C_l+C_1+C_r%)
		, K_1\rangle =%2
		1,\qquad Q(\gamma_6, \gamma_6) =\langle 2(%
		C_l+C_1+C_r%)
		, K_l+K_1+K_r\rangle =%6
		3.
		\] 
% 		As explained right before this example, we have taken as $e$ a multiple of the actual $e$ (as defined in \Cref{def:nilpotent_N}). 
% 		In this case, $e$ should have been $78$. We obtain the quadratic form $Q$ from the matrix above by dividing all entries by $156/78=2$. 
Observe that, as proven in \Cref{cor:even} the part corresponding to the absolute homology is %(still) 
even, but the part corresponding to the relative cycle is not.% even any more since $6/2=3$.
	\end{example}

	\begin{example}\label{ex:ACampo2}
		The germ $g=((y^2+x^3)^2+x^5y)((x^2+y^3)^2+xy^5)$ defines a singular point of plane curve with Milnor number $\mu=63$, and characteristic polynomials
\[
\Delta(t)=\frac{(t-1)(t^{20}-1)^2(t^{42}-1)^2}{(t^{10}-1)^2(t^{21}-1)^2}, \hbox{ and } \Delta_2(t)=\frac{t^4-1}{t-1},
\]
see \Cref{fig:2branches2pares} for the dual resolution graph and its Nielsen-Thurston graph.

		\begin{figure}[ht]
			\begin{center}
\begin{tikzpicture}[vertice/.style={draw,circle,fill,minimum size=0.2cm,inner sep=0}]
\coordinate (U) at (0,0);
\coordinate (A1) at (1,0);
\coordinate (A2) at (2,0);
\coordinate (A3) at (3,0);
\coordinate (B1) at (-1,0);
\coordinate (B2) at (-2,0);
\coordinate (B3) at (-3,0);
\coordinate (A4) at (1,1);
\coordinate (B4) at (-1,1);
\coordinate (F1) at (2,0.75);
\coordinate (F2) at (-2,0.75);

\node[vertice] at (U) {};
\node[below] at (U) {$8$};
\node[vertice] at (A1) {};
\node[below] at (A1) {$20$};
\node[vertice] at (B1) {};
\node[below] at (B1) {$20$};
\node[vertice] at (A2) {};
\node[below] at (A2) {$42$};
\node[vertice] at (B2) {};
\node[below] at (B2) {$42$};
\node[vertice] at (A3) {};
\node[below] at (A3) {$21$};
\node[vertice] at (B3) {};
\node[below] at (B3) {$21$};
\node[vertice] at (A4) {};
\node[above] at (A4) {$10$};
\node[vertice] at (B4) {};
\node[above] at (B4) {$10$};

\draw (U)--(A1)--(A2)--(A3);
\draw (U)--(B1)--(B2)--(B3);
\draw (A1)--(A4);
\draw (B1)--(B4);
\draw[->] (A2)--(F1);
\draw[->] (B2)--(F2);

\coordinate (C1) at (0,-1);
\coordinate (C2) at (0,-2);
\coordinate (C3) at (0,-3);
\coordinate (C4) at (0,-4);
\coordinate (D1) at (1,-1.5);
\coordinate (D2) at (-1,-1.5);
\coordinate (D3) at (1,-3.5);
\coordinate (D4) at (-1,-3.5);
\coordinate (E1) at (2,-2.5);
\coordinate (E2) at (-2,-2.5);
\coordinate (G1) at (2,-1.75);
\coordinate (G2) at (-2,-1.75);

\draw (E2) -- node[below=5pt,pos=.75] {$p_1$} (D2)to[out=45,in=180]
node[below=0pt,pos=1] {$p_3$}(C1) to[out=0,in=135] (D1)--node[below=0pt,pos=.25] {$p_7$}(E1) ;
\draw (D2)to[out=-45,in=180] node[below=0pt,pos=1] {$p_4$}(C2) to[out=0,in=-135] (D1);
\draw (E2)--node[above=5pt,pos=.75] {$p_2$}(D4)to[out=-45,in=180] node[below=0pt,pos=1] {$p_6$}(C4) to[out=0,in=-135] (D3)--node[above=0pt,pos=.25] {$p_8$}(E1);
\draw (D4)to[out=45,in=180] node[below=0pt,pos=1] {$p_5$}(C3) to[out=0,in=135] (D3);

\node[vertice] at (D1) {};
\node[vertice] at (D2) {};
\node[vertice] at (D3) {};
\node[vertice] at (D4) {};
\node[vertice] at (E1) {};
\node[vertice] at (E2) {};

\node[right] at (D1) {$g=2$};
\node[left] at (D2) {$g=2$};
\node[right] at (D3) {$g=2$};
\node[left] at (D4) {$g=2$};
\node[right] at (E1) {$g=10$};
\node[left] at (E2) {$g=10$};

\draw[->] (E1)--(G1) node[below right] {$p_r$};
\draw[->] (E2)--(G2) node[below left] {$p_l$};
\end{tikzpicture}
\caption{Dual graph of the embedded resolution of $g$, and the dual graph of its semistable reduction from a minimal $\mathbb{Q}$-resolution.}
				\label{fig:2branches2pares}
			\end{center}
		\end{figure}
		
		Let us fix a basis of $H_1(\gss )$ and $H_1(\gss , \calD)$. 
		Let us label by $p_i$ the edges (not arrows), from top to bottom and for left to right.
% 		Let us label the 8 edges in the perimeter of the Nielsen-Thurston graph starting from the leftmost vertex and clockwise by $p_i$, with $i=1, \dots , 8$. 
		Similary, we label the other 4 edges by $p_i$, with $i= 9, \dots, 12$. Denote the left (resp. right) arrow by $p_l$ (resp. $p_r$) with its natural orientation. The set 
\[
\left \{ 
		\begin{array}{c}\sigma_1= %p_2+p_3-p_9-p_{10}
		p_3-p_4,\\ 
		\sigma_2= %p_1+p_9+p_{10} +p_7-p_5-p_{11}-p_{12}-p_8
		p_1+p_4+p_7-p_8-p_5-p_2,\\
		\sigma_3= %p_{11}+p_{12}-p_6-p_7
		p_5-p_6
		\end{array} 
		\right \}
\]
is a basis of $H_1(\gss )$. A basis of  $H_1(\gss , \calD)$ is obtained adding the element 
		$\sigma_4=% -p_l +p_1+p_2+p_3+p_4+p_r
		-p_l+p_1+p_3+p_7+p_r$. 
		
		Let us show that the  quadratic form is represented by the matrix 
\[
\left ( \begin{array}{rrr|r}
42 & -21 & 0 & 21 \\
-21 & 46 & -21 & 2 \\
0 & -21 & 42 & 0 \\\hline
21 & 2 & 0 & 43		\end{array} \right ).
\]	
		In this case, we take $e=%840
		420$ and there are five orbits of curves or annuli:
		\begin{itemize}
			\item The orbits $\calC_l$ (resp. $\calC_r$) corresponding to the left-hand arrow $d_l$ (resp. right-hand arrow $d_r$) with $d=1$. Denote by $C_\ast$ (resp. $T_\ast$) for $\ast = l,r$ the unique curve (resp. annulus) of the orbit. Assume that the orientation of $C_\ast$ is determined by $p_\ast$ and  \Cref{lem:twist_number}\ref{lem:twist_number_i}. We have that  $\scn(h,\calC_\ast)=\frac{1}{42}$, and $\scn(h^e, T_\ast)=%20
			10$.
			\item The orbits $\calC_i$ with $i=1,2$ corresponding  to the bamboo of the two vertices with multiplicities 42 and  20 of  the dual resolution graph $\Gamma$ with $d=2$.  Denote by $C_{i,j}$ (resp. $T_{i,j}$), with $i=1, 2$, and $j=1, 2$, the components of $\calC$ (resp. of the orbit $\calT$ of annuli associated to $\calC$). Assume that the orientations of the curve $C_{i,j}$ are determined by $\sigma_2$ and \Cref{lem:twist_number}\ref{lem:twist_number_i}. We have that 
			$\scn(h, \calC_i)= \frac{1}{210}$, and $\scn(h^e,T_{i,j})=%2
			1$.
			\item The orbit $\calC_3$ corresponding  to the central bamboo of the three vertices with multiplicities 20, 8, and  20 of  the dual resolution graph $\Gamma$ with $d=4$.    Denote by $C_{3,k}$ (resp. $T_{3,k}$), with $k=1, \dots,  4$, the components of $\calC_3$ (resp. of the orbit $\calT_3$). Assume that the orientations of the curve $C_{3,k}$ are determined by $\sigma_1$ and $\sigma_3$ and \Cref{lem:twist_number}\ref{lem:twist_number_i}. We have that $\scn(h, \calC)=\frac{1}{10}$, and $\scn(h^e,T_{3,k})=%42
			21$.
		\end{itemize}
		
		For any of the above annulus $T$, we label  the boundary component of $T$ as in the previous example.
		
		The base element $\sigma_1$ lifts to a curve $\gamma_1$ in $F$ intersecting the annuli $T_{3,1}$ and $T_{3,2}$ in intervals $I_{3,1} \subset I_{3,1}^+$, and $I_{3,2} \subset I_{3,2}^-$. Then, we have that $Q(\gamma_1, \gamma_1)=\langle 21(
		C_{3,1}+C_{3,2})
		, I_{3,1}+I_{3,2}\rangle =%84
		42$.
		
		The base element  $\sigma_2$ lifts to a curve $\gamma_2$ in $F$ intersecting the annulus $T_{1,i}$ in an interval $J_{1,1} \subset I_{1,1}^+$, the annulus $T_{3,2}$ in an interval $J_{3,2} \subset I_{3,2}^+$, and the annulus $T_{2,1}$ in an interval $J_{2,1} \subset I_{2,1}^+$. Then,
\[
		Q(\gamma_2, \gamma_2)=\langle %2(
		C_{1,1} +C_{2,1}+ C_{2,2}+C_{1,2}%)
		+%42
		21(C_{3,2}+ C_{3,3}), I_{1,1}+I_{2,2}+I_{3,1}+I_{3,2}+I_{2,3}+I_{1,2}\rangle =%
		%92
		46,
\]		
and $Q(\gamma_1, \gamma_2) =\langle%42
21(C_{3,1}+C_{3,2}),J_{3,2}\rangle =-%42
21$.
		
		The base element $\sigma_4$ lifts to a curve $\gamma_4$ that intersects  the annulus $T_l$ in an interval $K_l \subset I_l^+$, the annulus $T_{1,1}$ in an interval $K_{1,1} \subset I_{1,1}^+$, the annulus $T_{3,1}$ in an interval $K_{3,1} \subset I_{3,1}^+$, and the annulus $T_l$ in an interval $K_l \subset I_l^+$. Then, 
\begin{gather*}
Q(\gamma_4, \gamma_4)= \langle %20
10C_l + %2
C_{1,1} + %42
21C_{3,1}+%2
C_{2,1} + %20
10C_r, I_l +I_{1,1}+I_{3,1}+I_{2,1}+ I_r\rangle = %86
43,\\
Q(\gamma_1, \gamma_4)= \langle %42
21(C_{3,1}+C_{3,2}), I_l +I_{1,1}+I_{3,1}+I_{2,1}+ I_r\rangle =%46
{21},\\
Q(\gamma_2, \gamma_4)= \langle %2(
C_{1,1} +C_{2,1}+ C_{2,2}+C_{1,2}%) 
-%42
21(C_{3,2}+ C_{3,3}), I_l +I_{1,1}+I_{3,1}+I_{2,1}+ I_r\rangle =%4
2.
\end{gather*}

% 		A similar comment as in the end of the previous example applies. In this case, $e$ should have been $420$. We obtain the quadratic form $Q$ from the matrix above by dividing all entries by $840/420=2$. 
	\end{example}

	\Cref{ex:decomp} shows that the restriction of $\tilde{Q}$ to  $H_1(F, \mathbb{Z})/ \ker N'$ can be decomposible.
		
	\begin{example}\label{ex:decomp}	
The 	polynomial	$h=(x+y)(x-y)(x^2+y^3)(y^2+x^3)$ defines a singular point with Milnor number $\mu=27$,
\[
\Delta(t)=\frac{(t-1)(t^{14}-1)^2(t^6-1)^2}{(t^7-1)^2}, \hbox{ and } \Delta_2(t)=(t-1)^2,
\]
see \Cref{fig:4branches} for the dual resolution graph and its Nielsen-Thurston graph.

		\begin{figure}[ht]
			\begin{center}
				\begin{tikzpicture}[vertice/.style={draw,circle,fill,minimum size=0.2cm,inner sep=0}]
				\coordinate (U) at (0,0);
				\coordinate (A1) at (1,0);
				\coordinate (A2) at (2,0);
				\coordinate (B1) at (-1,0);
				\coordinate (B2) at (-2,0);
				\coordinate (F1) at (1,0.75);
				\coordinate (F2) at (-1,0.75);
				\coordinate (F3) at (0.5,0.75);
				\coordinate (F4) at (-0.5,0.75);

				\node[vertice] at (U) {};
				\node[below] at (U) {$6$};
				\node[vertice] at (A1) {};
				\node[below] at (A1) {$14$};
				\node[vertice] at (B1) {};
				\node[below] at (B1) {$14$};
				\node[vertice] at (A2) {};
				\node[below] at (A2) {$7$};
				\node[vertice] at (B2) {};
				\node[below] at (B2) {$7$};
				
				\draw (U)--(A1)--(A2);
				\draw (U)--(B1)--(B2);
				\draw[->] (A1)--(F1);
				\draw[->] (B1)--(F2);
				\draw[->] (U)--(F3);
				\draw[->] (U)--(F4);

\begin{scope}[yshift=-5mm]
				\coordinate (Q) at (0,-1.5);
				\coordinate (C1) at (-1,-1.5);
				\coordinate (C2) at (1,-1.5);
				\coordinate (C3) at (0,-2);
				\coordinate (G1) at (-1,-0.75);
				\coordinate (G2) at (1,-0.75);
				\coordinate (G3) at (-0.5,-0.75);
				\coordinate (G4) at (0.5,-0.75);

				\node[vertice] at (Q) {};
				\node[below] at (C3) {$g=4$};
				\node[vertice] at (C1) {};
				\node[left] at (C1) {$g=3$};
				\node[vertice] at (C2) {};
				\node[right] at (C2) {$g=3$};
				
				%\draw (-1,-1.5) arc (180:0:0.5);
				\draw (0,-1.5) arc (0:-180:0.5);
				%\draw (0,-1.5) arc (180:0:0.5);
				\draw (1,-1.5) arc (0:-180:0.5);
				
				\draw (C1)--(C2);
				
				\draw[->] (C1)--(G1) node[left] {$p_l$};
				\draw[->] (C2)--(G2) node[right] {$p_r$};
				\draw[->] (Q)--(G3) node[above] {$p_{c,l}$};
				\draw[->] (Q)--(G4) node[above] {$p_{c,r}$};
				\end{scope}
				\end{tikzpicture}
				\caption{Dual graph of the embedded resolution of $h$, and %homotopy type of 
				the dual graph of its semistable reduction from a minimal $\mathbb{Q}$-resolution.}
				\label{fig:4branches}
			\end{center}
		\end{figure}

Let us label the vertex of the Nielsen-Thurston graph, from left to right and top to botton, by $a, b, c$. Let us denote the oriented edges from $a$ to $b$ by $p_i$ with $i=1,2$ and the oriented edges from $b$ to $c$ by $p_i$, with $i=3,4$. Let us label the arrows, from left to right, by $p_l, p_{cl}, p_{cr}, p_r$. The set $\{\sigma_1 = p_1-p_2, \sigma_2=p_3-p_4\}$ is a basis of $H_1(\gss )$. A basis of  $H_1(\gss , \calD)$ is obtained adding the elements 
\[
\sigma_{3}=-p_l+ p_1+p_{cl}, \sigma_{4}=-p_{c,l}+p_{c,r}, \sigma_{5}=-p_{c,r}+p_3+p_r.
\] 
Let us show that the  quadratic form is represented by the matrix 
\[
\left ( \begin{array}{rr|rrr}
		2&0&1&0&0\\
		0&2&0&0&1\\
		\hline
		1&0&11&-7&0\\
		0&0&-7&14&-7\\
		0&1&0&-7&11
		\end{array} \right ).
\]
In this case $e=42$ and there are six orbits of curves or annuli:
\begin{itemize}
			\item The orbits $\calC_\ast$ for $\ast=l,cl,cr,r$ corresponding to the arrow $d_\ast$  with $d=1$. Denote by $C_\ast$ (resp. $T_\ast$)  the unique curve (resp. annulus) of the orbit. Assume that the orientation of $C_\ast$ is determined by $p_\ast$ and 
			\Cref{lem:twist_number}\ref{lem:twist_number_i}. We have that  $s(h,\calC_\ast)=1/14$, $s(h^e, T_\ast)=3$, for $\ast = l,r$, and that $\scn(h,\calC_\ast)=1/6$, and $\scn(h^e, T_\ast)=7$ for $\ast=cl,cr$.
			\item The orbit $\calC_i$ with $i=1,2$ corresponding  to the bamboo of the two vertices with multiplicities 14 and  6 of  the dual resolution graph $\Gamma$ with $d=2$.  Denote by $C_{i,j}$ (resp. $T_{i,j}$), with $i=1, 2$, and $j=1,  2$, the components of $\calC$ (resp. of the orbit $\calT$ of annuli associated to $\calC$). Assume that the orientations of the curve $C_{i,j}$ are determined by $\gamma_1$ and $\gamma_3$ and \Cref{lem:twist_number}\ref{lem:twist_number_i}. We have that 
			$\scn(h, \calC_i)= \frac{1}{2}1$, and $\scn(h^e,T_{i,j})=1$.
		\end{itemize}
		
		The base element $\sigma_i$, for $i=1,2$ lifts to a curve $\gamma_i$ in $F$ intersecting the annuli $T_{i,1}$ and $T_{i,2}$ in intervals $I_{i,1} \subset I_{i,1}^+$, and $I_{i,2} \subset I_{i,2}^-$. Then, we have that $Q(\gamma_1, \gamma_1)=\langle C_{i,1}+C_{i,2}, I_{i,1}+I_{i,2}\rangle =2$.
		
		The base element $\sigma_3$ lifts to a curve $\gamma_3$ in $F$ intersecting the annulus $T_l$ in an interval $J_l\subset I_l^-$, the annulus $T_{1,1}$ in an interval $J_{1,1} \subset I_{1,1}^+$, and the annulus $T_{cl}$ in an interval $J_{cl} \subset I_l^+$. We have that
		$Q(\gamma_3, \gamma_3)=\langle 3C_l + C_{1,1} + 7 C_{cl}, J_l + J_{1,1} + J_{cl}\rangle =11,$ and
		$Q(\gamma_1, \gamma_3)=\langle C_{1,1}+C_{1,2}, J_l + J_{1,1} + J_{cl}\rangle =1$.
		
		The base element $\sigma_4$ lifts to a curve $\gamma_4$ in $F$ intersecting the annulus $T_{cl}$ in an interval $K_{c,l} \subset I_{cl}^+$, and the annulus $T_{cr}$ in an interval $K_{cr} \subset I_{cr}^-$. We have that
		$Q(\gamma_4, \gamma_4) = \langle 7(C_{cl}+C_{cr}), K_{cl}+K_{cr}\rangle =14,$ and $Q(\gamma_3, \gamma_4)=\langle 3C_l + C_{1,1} + 7 C_{cl}, K_{cl}+K_{cr}\rangle = -7.$
		
	\end{example}
	%{enumerate}

	\Cref{ex:SSS} and \Cref{ex:DBM} are of historical interest in Singularity Theory.
		
\begin{example}\label{ex:SSS} Steenbrink, Stevens, and Schrauwen showed that spectral pairs are not a complete invariant of the topological type of plane curve singularities, see \cite[Example 5.4]{Steen} and~\cite{Kaen}. Let us  consider the particular curves  given by polynomials $f_{11;00}$ and $f_{10;10}$, where 
\[
f_{kl;mn}=((y-x^2)^2-x^{5+k})((y+x^2)^2-x^{5+l})((x-y^2)^2-y^{5+m})((x+y^2)^2-y^{5+n}).
\]
Our computations below show that the corresponding quadratic forms $\tilde{Q}$ have different determinants so they can't be similar
(even over $\mathbb{Q}$ since the quotient of the determinants is not a perfect square).
% {\color{red} Se os ocurre algo para la pregunta del primer referee? Relación con de nuestro invariante con the
% restriction of the Seifert form to the radical of the intersection form.}
	
\Cref{fig:sss_example} shows the dual resolution graphs and Nielsen-Thurston graphs of the curves defined by  $f_{11;00}$ (left) and $f_{10;10}$ (right); the weights in the 
Nielsen-Thurston graph are the multiplicities and the genera.  
	
	\begin{figure}[ht]
		\begin{center}
			\begin{tikzpicture}[vertice/.style={draw,circle,fill,minimum size=0.2cm,inner sep=0}]
			\coordinate (U) at (-3,0);
			\coordinate (A1) at (-2,0);
			\coordinate (A2) at (-1,1);
			\coordinate (A3) at (-1,-1);
			\coordinate (B1) at (-4,0);
			\coordinate (B2) at (-5,1);
			\coordinate (B3) at (-5,-1);
			\coordinate (C2) at (0,1);
			\coordinate (C3) at (0,-1);
			
			\coordinate (F1) at (-6,0.75);
			\coordinate (F2) at (-6,1.25);
			\coordinate (F3) at (-6,-0.75);
			\coordinate (F4) at (-6,-1.25);
			\coordinate (F5) at (-1,1.75);
			\coordinate (F6) at (-1,-0.25);
			
			\node[vertice] at (U) {};
			\node[below] at (U) {$8$};
			\node[vertice] at (A1) {};
			\node[below] at (A1) {$12$};
			\node[vertice] at (B1) {};
			\node[below] at (B1) {$12$};
			\node[vertice] at (A2) {};
			\node[below] at (A2) {$26$};
			\node[vertice] at (B2) {};
			\node[below] at (B2) {$14$};
			\node[vertice] at (A3) {};
			\node[below] at (A3) {$26$};
			\node[vertice] at (B3) {};
			\node[below] at (B3) {$14$};
			\node[vertice] at (C2) {};
			\node[below] at (C2) {$13$};
			\node[vertice] at (C3) {};
			\node[below] at (C3) {$13$};
			
			\draw (A3)--(A1);
			\draw (B1)--(B3);
			\draw (C3)--(A3);
			\draw (A2)--(C2);
			\draw (U)--(A1)--(A2);
			\draw (U)--(B1)--(B2);
			\draw[->] (B2)--(F1);
			\draw[->] (B2)--(F2);
			\draw[->] (B3)--(F3);
			\draw[->] (B3)--(F4);
			\draw[->] (A2)--(F5);
			\draw[->] (A3)--(F6);

			\coordinate (RU) at (4,0);
			\coordinate (RA1) at (5,0);
			\coordinate (RA2) at (6,1);
			\coordinate (RA3) at (6,-1);
			\coordinate (RB1) at (3,0);
			\coordinate (RB2) at (2,1);
			\coordinate (RB3) at (2,-1);
			\coordinate (RC2) at (1,-1);
			\coordinate (RC3) at (7,-1);
			
			\coordinate (RF1) at (1,0.75);
			\coordinate (RF2) at (1,1.25);
			\coordinate (RF3) at (2,-0.25);
			\coordinate (RF4) at (6,-0.25);
			\coordinate (RF5) at (7,1.25);
			\coordinate (RF6) at (7,0.75);
			
			\node[vertice] at (RU) {};
			\node[below] at (RU) {$8$};
			\node[vertice] at (RA1) {};
			\node[below] at (RA1) {$12$};
			\node[vertice] at (RB1) {};
			\node[below] at (RB1) {$12$};
			\node[vertice] at (RA2) {};
			\node[below] at (RA2) {$14$};
			\node[vertice] at (RB2) {};
			\node[below] at (RB2) {$14$};
			\node[vertice] at (RA3) {};
			\node[below] at (RA3) {$26$};
			\node[vertice] at (RB3) {};
			\node[below] at (RB3) {$26$};
			\node[vertice] at (RC2) {};
			\node[below] at (RC2) {$13$};
			\node[vertice] at (RC3) {};
			\node[below] at (RC3) {$13$};
			
			\draw (RA3)--(RA1);
			\draw (RB1)--(RB3);
			\draw (RC3)--(RA3);
			\draw (RB3)--(RC2);
			\draw (RU)--(RA1)--(RA2);
			\draw (RU)--(RB1)--(RB2);
			\draw[->] (RB2)--(RF1);
			\draw[->] (RB2)--(RF2);
			\draw[->] (RB3)--(RF3);
			\draw[->] (RA3)--(RF4);
			\draw[->] (RA2)--(RF5);
			\draw[->] (RA2)--(RF6);

			\coordinate (P1) at (2,-4);
			\coordinate (P2) at (3.5,-4);
			\coordinate (P3) at (5,-4);
			\coordinate (P4) at (6.5,-4);
			\coordinate (P5) at (2,-3);
			\coordinate (P6) at (3.5,-3);
			\coordinate (P7) at (5,-3);
			\coordinate (P8) at (6.5,-3);
			
			\coordinate (Q1) at (-0.5,-4);
			\coordinate (Q2) at (-2,-4);
			\coordinate (Q3) at (-3.5,-4);
			\coordinate (Q4) at (-5,-4);
			\coordinate (Q5) at (-0.5,-3);
			\coordinate (Q6) at (-2,-3);
			\coordinate (Q7) at (-3.5,-3);
			\coordinate (Q8) at (-5,-3);

			\coordinate (Z1) at (-1.25,-3);
			\coordinate (Z2) at (-2.75,-2.75);
			\coordinate (Z3) at (-4.25,-3);
			\node[above] at (Z1) {$c_9$};
			\node[above] at (Z2) {$c_5$};
			\node[above] at (Z3) {$c_1$};
			\coordinate (Z1) at (-1.25,-4);
			\coordinate (Z2) at (-2.75,-4.25);
			\coordinate (Z3) at (-4.25,-4);
			\node[below] at (Z1) {$c_{12}$};
			\node[below] at (Z2) {$c_8$};
			\node[below] at (Z3) {$c_4$};
			\coordinate (Z6) at (-1.75,-3.6);
			\coordinate (Z7) at (-3,-3.7);
			\coordinate (Z8) at (-4.75,-3.6);
			\node[above] at (Z6) {$c_{10}$};
			\node[above] at (Z7) {$c_6$};
			\node[above] at (Z8) {$c_2$};
			\coordinate (Z11) at (-1.85,-3.9);
			\coordinate (Z10) at (-2.5,-3.9);
			\coordinate (Z9) at (-4.85,-3.9);
			\node[above] at (Z11) {$c_{11}$};
			\node[above] at (Z10) {$c_{7}$};
			\node[above] at (Z9) {$c_{3}$};

			\node[vertice] at (P1) {};
			\node[vertice] at (P2) {};
			\node[vertice] at (P3) {};
			\node[vertice] at (P4) {};
			\node[vertice] at (P5) {};
			\node[vertice] at (P6) {};
			\node[vertice] at (P7) {};
			\node[vertice] at (P8) {};
			
			\node[below] at (P1) {$26,6$};
			\node[below] at (P2) {$12,2$};
			\node[below] at (P3) {$12,2$};
			\node[below] at (P4) {$26, 6$};
			
			\node[above] at (P5) {$14,6$};
			\node[above] at (P6) {$12,2$};
			\node[above] at (P7) {$12,2$};
			\node[above] at (P8) {$14,6$};

			\coordinate (P1b) at (2,-3.5);
			\coordinate (P4b) at (6.5,-3.5);
			\coordinate (P5c) at (1,-3.25);
			\coordinate (P5b) at (1,-2.75);
			\coordinate (P8b) at (7.5,-3.25);
			\coordinate (P8c) at (7.5,-2.75);
			\draw[->] (P8)--(P8b) node[right] {$a_{r2}$};
			\draw[->] (P8)--(P8c) node[right] {$a_{r1}$};
			\draw[->] (P4)--(P4b)  node[right] {$a_{r3}$};
			\draw[->] (P1)--(P1b)  node[left] {$a_{l3}$};
			\draw[->] (P5)--(P5b)  node[above] {$a_{l1}$};
			\draw[->] (P5)--(P5c)  node[below] {$a_{l2}$};

			\node[vertice] at (Q1) {};
			\node[vertice] at (Q2) {};
			\node[vertice] at (Q3) {};
			\node[vertice] at (Q4) {};
			\node[vertice] at (Q5) {};
			\node[vertice] at (Q6) {};
			\node[vertice] at (Q7) {};
			\node[vertice] at (Q8) {};
			
			\node[below] at (Q1) {$26,6$};
			\node[below] at (Q2) {$12,2$};
			\node[below] at (Q3) {$12,2$};
			\node[below] at (Q4) {$14,6$};
			
			\node[below right] at (Q5) {$26,6$};
			\node[above] at (Q6) {$12,2$};
			\node[above] at (Q7) {$12,2$};
			\node[above] at (Q8) {$14,6$};
			
			\coordinate (Q1b) at (-0.5,-3.5);
			\coordinate (Q4b) at (-6,-4.25);
			\coordinate (Q4c) at (-6,-3.75);
			\coordinate (Q5b) at (-0.5,-2.5);
			\coordinate (Q8b) at (-6,-3.25);
			\coordinate (Q8c) at (-6,-2.75);
			\draw[->] (Q8)--(Q8b);
			\draw[->] (Q8)--(Q8c);
			\draw[->] (Q4)--(Q4b);
			\draw[->] (Q4)--(Q4c);
			\draw[->] (Q1)--(Q1b);
			\draw[->] (Q5)--(Q5b);
			
			\node[right] at (Q5b) {$a_{r1}$};
			\node[below right] at (Q1b) {$a_{r2}$};
			\node[left] at (Q8b) {$a_{l2}$};
				\node[left] at (Q8c) {$a_{l1}$};
				\node[left] at (Q4c) {$a_{l3}$};
				\node[left] at (Q4b) {$a_{l4}$};

			\draw (P1)--(P2);
			\draw (P2) to[out=30,in=150](P3);
			\draw (P2) to[out=-30,in=-150](P3);
			\draw (P3)--(P4);			
			\draw (P1)--(P6);
% 			\draw (P2)--(P7);
			\draw (P3)--(P8);
			\draw (P5)--(P6);
			\draw (P6) to[out=30,in=150](P7);
			\draw (P6) to[out=-30,in=-150](P7);
			\draw (P7)--(P8);
			\draw (P5)--(P2);
% 			\draw (P6)--(P3);
			\draw (P7)--(P4);
			
			\draw (Q1)--(Q2);
			\draw (Q2)to[out=150,in=30] (Q3);
			\draw (Q2)to[out=-150,in=-30] (Q3);
			\draw (Q3)--(Q4);
			\draw (Q5)--(Q6);
			\draw (Q6) to[out=150,in=30] (Q7);
			\draw (Q6) to[out=-150,in=-30] (Q7);
			\draw (Q7)--(Q8);
			\draw (Q1)--(Q6);
% 			\draw (Q2)--(Q7);
			\draw (Q3)--(Q8);
			\draw (Q5)--(Q2);
% 			\draw (Q6)--(Q3);
			\draw (Q7)--(Q4);

			\end{tikzpicture}
\caption{}	
		\end{center}
	\label{fig:sss_example}
	\end{figure}

Remark that the Milnor number of the curves defined by $f_{11;00}$ and $f_{10;10}$ is $79$ and their characteristic polynomials are
\[
\Delta=\frac{(t-1)(t^{14}-1)^2(t^{12}-1)^2(t^{26}-1)^2}{(t^{13}-1)^2}, \quad \hbox{ and } \quad \Delta_2=\frac{(t^2-1)^2(t^4-1)}{(t-1)^3}.
\]
Let us compute their associated quadratic forms.  Since the set of multiplicities of the exceptional divisors coincide, we have that  $e=%2184
1092$ for both curves.

For the first case (left in \Cref{fig:sss_example}), we consider the following basis for homology: 
	\begin{align*}
	\sigma_1&= c_1-c_{3}+c_4-c_2,\\ 
	\sigma_2&=c_5-c_6,\\ 
	\sigma_3&=c_6+c_{10}-c_{12}-c_7-c_2+c_1,\\ 
	\sigma_4&=c_7-c_8,\\
	\sigma_5&=c_9-c_{11}+c_{12}-c_{10}\\
	\sigma_6&=-a_{l1}+a_{l2},\\
	\sigma_7&=-a_{l2}+c_2-c_4+a_{l3},\\
	\sigma_8&=-a_{l3}+a_{l4},\\
	\sigma_9&=-a_{l4}+c_4+c_8+c_{12}+a_{r2},\\
	\sigma_{10}&=-a_{r2}-c_{10}+c_9+a_{r1},
	\end{align*}
where all $c_i$ are oriented from left to right, and arrows are oriented with their given orientation in \Cref{fig:sss_example}, and we get that the quadratic form associated to $f_{11;00}$
is given by the matrix 
\[
\left(\begin{array}{rrrrr|rrrrr}
52 & 0 & 26 & 0 & 0 & 0 & -26 & 0 & 13 & 0 \\
0 & 182 & -91 & 0 & 0 & 0 & 0 & 0 & 0 & 0 \\
26 & -91 & 222 & -91 & -14 & 0 & -13 & 0 & -7 & -7 \\
0 & 0 & -91 & 182 & 0 & 0 & 0 & 0 & -91 & 0 \\
0 & 0 & -14 & 0 & 28 & 0 & 0 & 0 & 7 & 14 \\
\hline
0 & 0 & 0 & 0 & 0 & 156 & -78 & 0 & 0 & 0 \\
-26 & 0 & -13 & 0 & 0 & -78 & 182 & -78 & -13 & 0 \\
0 & 0 & 0 & 0 & 0 & 0 & -78 & 156 & -78 & 0 \\
13 & 0 & -7 & -91 & 7 & 0 & -13 & -78 & 231 & -42 \\
0 & 0 & -7 & 0 & 14 & 0 & 0 & 0 & -42 & 98
\end{array}\right)
\]
For the second case (right in \Cref{fig:sss_example}), we fix the following basis for homology:
	\begin{align*}
	\sigma_1&= c_1-c_{3}+c_4-c_2,\\ 
	\sigma_2&=c_5-c_6,\\ 
	\sigma_3&=c_6+c_{10}-c_{12}-c_7-c_2+c_1,\\
	\sigma_4&=c_7-c_8,\\
	\sigma_5&=c_9-c_{11}+c_{12}-c_{10}\\
	\sigma_6&=-a_{l1}+a_{l2},\\
	\sigma_7&=-a_{l2}+c_2-c_4+a_{l3},\\
	\sigma_8&=-a_{l3}+c_4+c_8+c_12+a_{r3},\\
	\sigma_9&=-a_{r3}-c_{10}+c_9+a_{r2},\\
	\sigma_{10}&=-a_{r2}+a_{r1},
	\end{align*}
	and we get that the quadratic form associated to $f_{10;10}$
	is given by the matrix 
\[
\left(
\begin{array}{rrrrr|rrrrr}
40 & 0 & 26 & 0 & 0 & 0 & -20 & 7 & 0 & 0 \\
0 & 182 & -91 & 0 & 0 & 0 & 0 & 0 & 0 & 0 \\
26 & -91 & 222 & -91 & -14 & 0 & -13 & -7 & -7 & 0 \\
0 & 0 & -91 & 182 & 0 & 0 & 0 & -91 & 0 & 0 \\
0 & 0 & -14 & 0 & 40 & 0 & 0 & 7 & 20 & 0 \\
\hline
0 & 0 & 0 & 0 & 0 & 156 & -78 & 0 & 0 & 0 \\
-20 & 0 & -13 & 0 & 0 & -78 & 140 & -49 & 0 & 0 \\
7 & 0 & -7 & -91 & 7 & 0 & -49 & 189 & -42 & 0 \\
0 & 0 & -7 & 0 & 20 & 0 & 0 & -42 & 140 & -78 \\
0 & 0 & 0 & 0 & 0 & 0 & 0 & 0 & -78 & 156
\end{array}
\right)
\]
Notice that these matrices have different  determinants (up to squares) and, hence, the two quadratic  forms $\tilde{Q}$ are not similar (over~$\mathbb{Q}$). The restrictions of $\tilde{Q}$ to $H_1(F)$ are  not similar either because the determinants of the $5 \times 5$ minors corresponding to the first 5 rows and first 5 columns  are different (up to squares) too.

It is worthwhile to mention that Selling reduction of the (positive definite quadratic) Seifert form defined on $H_1(\Sigma, \ZZ) \cap H_1(\sigma, \CC)_1$ was used by Kaenders to 
recover the pairwise intersection multiplicity of the different branches of a plane curve singularity \cite[Theorem 1.4]{Kaen} and thus to distinguish the above examples.   
% 
% 
% 	A  comment referring to the $e$ number applies here. In this case, $e$ should have been $1092 = \lcm(14,12,26)$. We obtain the quadratic form $Q$ from the matrix above by dividing all entries by $2185/1092=2$. Observe that this applies to both matrices and so the discussion of the paragraph above still is valid.
\end{example}

\begin{example}\label{ex:DBM} DuBois and Michel showed that the Seifert form is not a complete invariant of the topological type of plane curve singularities, see \cite{DBM2}. Let us consider the %particular 
curves $C_{a,b}$
%$C_{1,11}$ and $C_{3,9}$ 
defined by 
%the polynomials
\[
f_{a,b}(x,y)=\left((y^2-x^3)^2-x^{b+6}-4 x^{\frac{b+9}{2}} y\right)
\left((x^2-y^5)^2-y^{a+10}-4xy^{\frac{a+15}{2}}\right),
\]
where $a,b$ are odd integers, i.e. of the form $a=2\alpha+1$ and $b=2\beta+1$
and $b\geq 11$.
% Let us consider the general case; since $a,b$ must be odd integer numbers we write. 
The Milnor number of $C_{a,b}$ is 
$75+2(\alpha+\beta)=a+b+73$, and their characteristic polynomials are
\[
\Delta= (t-1)(t^{10}+1)(t^{14}+1)(t^{28+a}+1)(t^{20+b}+1), \quad \hbox{ and } \quad \Delta_2=\frac{t^4-1}{t-1},
\]
see \Cref{fig:dbm}. These data are invariant by the change $(a,b)\mapsto(b-8,a+8)$; the singularities
$C_{a,b}$ and $C_{b-8,a+8}$ are not topologically equivalent if $b\neq a+8$ but their
Seifert forms coincide. 
\begin{figure}[ht]
\centering\begin{tikzpicture}[vertice/.style={draw,circle,fill,minimum size=0.2cm,inner sep=0}]
\foreach \a in {0,...,13}
{
\coordinate (A-\a) at (\a,0);
}
\coordinate (A-14) at (1,-1);
\coordinate (A-15) at (5,-1);
\coordinate (A-16) at (8,-1);
\coordinate (A-17) at (12,-1);
\foreach \a in {1,2,4,5,...,9,11,12,14,15,...,17}
{
\node[vertice] at (A-\a) {};
}
\draw[->] ($.5*(A-2)+.5*(A-3)$)--(A-0);
\draw[dashed] ($.5*(A-2)+.5*(A-3)$)--($.5*(A-3)+.5*(A-4)$) ($.5*(A-9)+.5*(A-10)$)--($.5*(A-10)+.5*(A-11)$);
\draw($.5*(A-3)+.5*(A-4)$)--($.5*(A-9)+.5*(A-10)$) (A-1)--(A-14) (A-5)--(A-15) (A-8) -- (A-16) (A-12) -- (A-17);
\draw[->] ($.5*(A-10)+.5*(A-11)$) -- (A-13);

\foreach \x/\y in {7/8,6/12, 4/30,2/{28+2\alpha}, 14/{29+2\alpha}, 9/22, 11/{20+2\beta}, 17/{21+2\beta}, 15/14, 16/10}
{
\node[below] at (A-\x) {$\y$};
}
\foreach \x/\y in {8/20, 5/28, 1/{58+4\alpha}, 12/{42+4\beta}}
{
\node[above] at (A-\x) {$\y$};
}

\begin{scope}[yshift=-4cm,xshift=1cm]

\foreach \a\b in {0,1,6/4,7/5}
{
\coordinate (A-\a) at (2*\b,0);
}
\foreach \a\b\c in {2/2/1, 3/2/-1, 4/3/1,5/3/-1}
{
\coordinate (A-\a) at (2*\b,\c);
}
\foreach \a in {1,...,6}
{
\node[vertice] at (A-\a) {};
}
\draw[->] (A-1)--(A-0) node[left] {$a_l$} ;
\draw (A-1)-- node[above] {$c_1$} (A-2) (A-1) -- node[below] {$c_2$} (A-3)  (A-4) -- node[above] {$c_7$} (A-6)  (A-5)-- node[below] {$c_8$} (A-6);
\draw[->]  (A-6)--(A-7) node[right] {$a_r$} ;
\draw (A-2) to[out=30, in=150] node[above] {$c_3$} (A-4);
\draw (A-2) to[out=-30, in=-150] node[below] {$c_4$}  (A-4);
\draw (A-3) to[out=30, in=150] node[above] {$c_5$} (A-5);
\draw (A-3) to[out=-30, in=-150] node[below] {$c_6$}  (A-5);

\node[above left] at (A-1) {$58+4\alpha$};
\node[below left] at (A-1) {$14+\alpha$};
\node[above] at (A-2) {$28$};
\node[below] at (A-2) {$3$};
\node[above] at (A-3) {$28$};
\node[below] at (A-3) {$3$};
\node[above] at (A-4) {$20$};
\node[below] at (A-4) {$2$};
\node[above] at (A-5) {$20$};
\node[below] at (A-5) {$2$};
\node[above right] at (A-6) {$42+4\beta$};
\node[below right] at (A-6) {$10+\beta$};
\end{scope}

\end{tikzpicture}

\caption{Dual resolution of $f_{2\alpha+1,2\beta+1}$ and Nielsen-Thurston graph (upper weight is multiplicity and lower weight is genus).}
\label{fig:dbm}
\end{figure}

Let us compute the associated quadratic form $\tilde{Q}_{a,b}$. 
% Since the set of multiplicities of the exceptional divisors coincide, we consider  $e= %755160
% 125860$ for both curves.
Let us fix the following basis for the homology of the Nielsen-Thurston graph
\begin{align*}
\sigma_1&= c_4-c_5,\\ 
\sigma_2&=c_1+c_5+c_8-c_9-c_6-c_2,\\
\sigma_3&=c_6-c_7,\\ 
\sigma_4&=-a_3+c_1+c_4+c_8+a_r, 
\end{align*}
where all $c_i$ are oriented from left to right, and the arrows are given their natural orientation. If we denote $P_2(a,b)={\left(a + 28\right)} {\left(b + 20\right)}$ and
$P_1(a,b)=a+b+48$, then the matrix of the quadratic form
$\tilde{Q}_{a,b}$ in this basis is
\[
\left(\begin{array}{rrrr}
22 \, P_2(a,b) & -11 \, P_2(a,b) & 0 & 11 \,P_2(a,b) \\
-11 \, P_2(a,b) & 46 P_2(a,b) - 280 P_1(a,b) & -11 \, P_2(a,b) & 12 \, P_2(a,b) -  140 P_1(a,b) \\
0 & -11 \, P_2(a,b) & 22 \, P_2(a,b) & 0 \\
11 \, P_2(a,b) & 12 \, P_2(a,b) -  140 P_1(a,b) & 0 & 23 \, P_2(a,b) -  70 P_1(a,b)
\end{array}\right).
\]
Note that the matrices are invariant by the change
$(a,b)\mapsto(b-8,a+8)$.
% associated to $C_{1,11}$ (resp. $C_{3,9}$)	is
% \[
% 	%6 \cdot 
% 	\left ( \begin{array}{cccccccccccc}
% 	20088&-19778& 9889&155\\
% 	-19778& 39556& -19778&9889\\
% 	9889& -19778&24244&2233\\
% 	155&9889&2233&16477
% \end{array} \right ), 
% \ \hbox{ (resp.  } 
%   %6 \cdot 
%   \left ( \begin{array}{cccccccccccc}
% 20648&-19778& 9889&435\\
% -19778& 39556& -19778&9889\\
% 9889& -19778&23684&1953\\
% 435&9889&1953&16477
% \end{array} \right )).
% \]
% Notice that these matrices have different determinants (up to squares) and, hence, the  two quadratics forms are not similar. 
% 
% 	A  comment referring to the $e$ number applies here. In this case, $e$ should have been $125860 = \lcm(58, 28, 20, 62)$. We obtain the quadratic form $Q$ from the matrix above by dividing all entries by $755160/125860=6$. In this case, this is already indicated in the matrices above where the factor $6$ appears outside the matrix. The comment from the paragraph above still applies.
	\end{example}

	\subsection{Monodromy not coming from plane curves}\label{siex}
\mbox{}

Consider the hypersurface singularity defined by $X=\{x y z+x^3-y^3+z^4\}\subset\mathbb{C}^3$. This is an example of a superisolated singularity as were introduced by I. Luengo in \cite{Luen}. In that same paper, Luengo introduced methods to easily compute the self-intersection of the divisors in the resolution of $X$ (see \cite[Lemma 3]{Luen}). Fix any generic linear holomorphic map $ax+by+cz$ and take $f$ to be its restriction to $X$. 
	
	Then the plumbing graph of the link of $X$ is given by the resolution graph of $X$ and the strict transform of $f$ induces a system of multiplicities as in \Cref{fig:graph_non_planecurve}. In this case, since the multiplicity at the only node is $1$, the semistable reduction graph has the same homotopy type. 
	
	\begin{figure}[!ht]
		\includegraphics{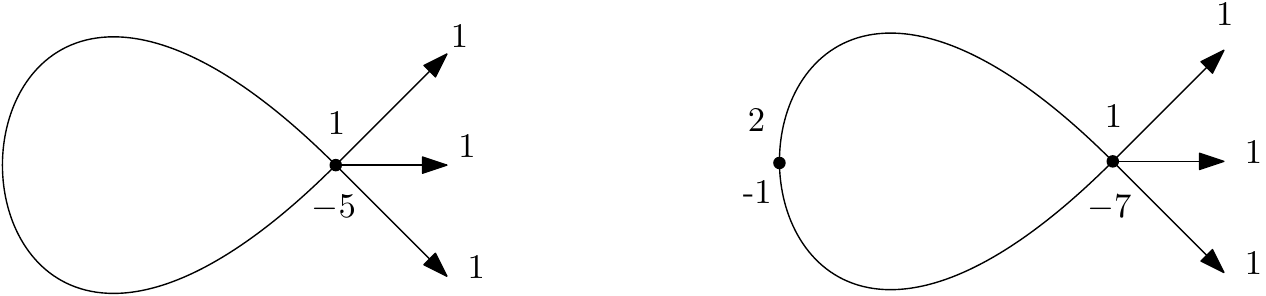}
		\caption{On the left we see the minimal plumbing graph corresponding to the link of $X$, on the right, we see the same graph after blowing up the nodal point of its unique divisor. The negative numbers represent the Euler number of the induced $\mathbb{S}^1$-fiber bundles and the positive numbers represent the multiplicities induced by $f$. Note that in the case of smooth divisors this Euler number is the self-intersection of the corresponding divisors; it is not
			the case for the singular points and this is why we put $-5$ instead of $-3$.}
		\label{fig:graph_non_planecurve}
	\end{figure}
	
	The multiplicity $1$ on the only node indicates that the monodromy on the corresponding piece is the identity. Therefore the monodromy is a multitwist, that is a composition of Dehn twists around disjoint simple closed curves (including  parallel to all boundary components).
	
	Using \Cref{tf} we find that the screw numbers near the boundary components are all $1$ and the screw number corresponding to the loop is also $\frac{1}{2} + \frac{1}{2} =1$.
	
	In the next figure we have drawn a model of the Milnor fiber together with the curves around which the Dehn twists are performed and representatives of a basis for the relative homology.
	\begin{figure}[!ht]
		\includegraphics[scale=0.65]{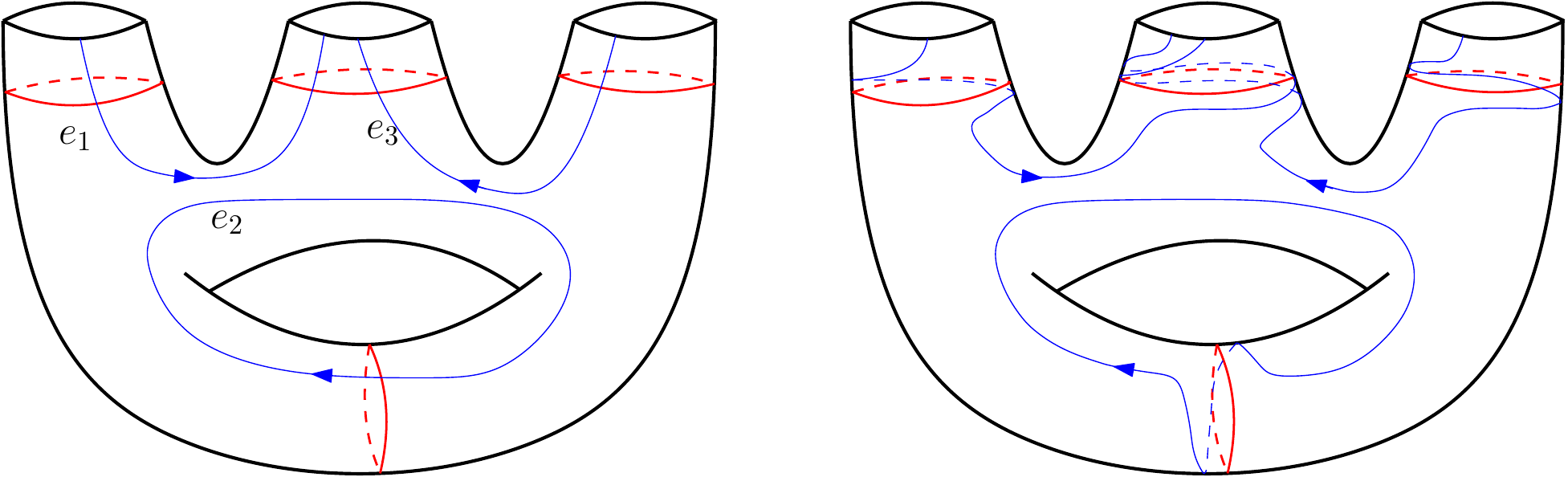}
		\caption{The Milnor fiber of the fibration induced by $f$: a torus minus $3$ disks. The monodromy consists of the composition of one right-handed Dehn twist around each of the red curves. In blue we see generators of the relative homology.}
		\label{fig:milnorfiber_nonplanecurve}
	\end{figure}
	
	With respect to the basis $\{ e_1, e_2, e_3\}$ depicted in \Cref{fig:milnorfiber_nonplanecurve} and the screw numbers, we can compute the associated quadratic form:
\[
\left ( \begin{array}{rrr}
	2&0&1\\
	0&1&0\\
	1&0&2
	\end{array} \right ).
\]
	In particular we observe that it is not an even quadratic form (compare with \Cref{cor:even} and \Cref{cor:even2}).

% % 	\bibliographystyle{alpha}
% % 	\bibliography{bibliography}
% 	
% 	
% 	
\begin{comment}
	
	\begin{example}
	\begin{enumerate}
	\item $f=(x^6+y^7)(x^7+y^6)$ (an A'Campo double $(7,6)$-cusp) defines a 
	singular point with Milnor number $\mu=131$,
	$$\Delta(t)=\frac{(t-1)(t^{78}-1)^2}{(t^{13}-1)^2}, \hbox{ and } 
	\Delta_2(t)=\frac{t^6-1}{t-1}.$$
	See \Cref{fig:2branches67}.
	
	\begin{figure}[ht]
	\begin{center}
	\begin{tikzpicture}[vertice/.style={draw,circle,fill,minimum 
	size=0.2cm,inner sep=0}]
	\coordinate (U) at (0,0);
	\coordinate (A1) at (1,0);
	\coordinate (A2) at (2,0);
	\coordinate (A3) at (3,0);
	\coordinate (A4) at (4,0);
	\coordinate (A5) at (5,0);
	\coordinate (A6) at (6,0);
	\coordinate (B1) at (-1,0);
	\coordinate (B2) at (-2,0);
	\coordinate (B3) at (-3,0);
	\coordinate (B4) at (-4,0);
	\coordinate (B5) at (-5,0);
	\coordinate (B6) at (-6,0);
	\coordinate (F1) at (1,0.75);
	\coordinate (F2) at (-1,0.75);

	\node[vertice] at (U) {};
	\node[below] at (U) {$12$};
	\node[vertice] at (A1) {};
	\node[below] at (A1) {$78$};
	\node[vertice] at (B1) {};
	\node[below] at (B1) {$78$};
	\node[vertice] at (A2) {};
	\node[below] at (A2) {$65$};
	\node[vertice] at (A3) {};
	\node[below] at (A3) {$52$};
	\node[vertice] at (A4) {};
	\node[below] at (A4) {$39$};
	\node[vertice] at (A5) {};
	\node[below] at (A5) {$26$};
	\node[vertice] at (A6) {};
	\node[below] at (A6) {$13$};
	\node[vertice] at (A1) {};
	\node[below] at (A1) {$78$};
	\node[vertice] at (B1) {};
	\node[below] at (B1) {$78$};
	\node[vertice] at (B2) {};
	\node[below] at (B2) {$65$};
	\node[vertice] at (B3) {};
	\node[below] at (B3) {$52$};
	\node[vertice] at (B4) {};
	\node[below] at (B4) {$39$};
	\node[vertice] at (B5) {};
	\node[below] at (B5) {$26$};
	\node[vertice] at (B6) {};
	\node[below] at (B6) {$13$};
	
	\draw (U)--(A1)--(A2)--(A3)--(A4)--(A5)--(A6);
	\draw (U)--(B1)--(B2)--(B4)--(B4)--(B5)--(B6);
	\draw[->] (A1)--(F1);
	\draw[->] (B1)--(F2);

	\coordinate (C1) at (0,-1);
	\coordinate (C2) at (0,-2);
	\coordinate (C3) at (0,-3);
	\coordinate (C4) at (0,-4);
	\coordinate (C5) at (0,-5);
	\coordinate (C6) at (0,-6);
	\coordinate (D1) at (1,-3.5);
	\coordinate (D2) at (-1,-3.5);
	\draw (D1)--(C1)--(D2);
	\draw (D1)--(C2)--(D2);
	\draw (D1)--(C3)--(D2);
	\draw (D1)--(C4)--(D2);
	\draw (D1)--(C5)--(D2);
	\draw (D1)--(C6)--(D2);
	\node[vertice] at (C1) {};
	\node[vertice] at (C2) {};
	\node[vertice] at (C3) {};
	\node[vertice] at (C4) {};
	\node[vertice] at (C5) {};
	\node[vertice] at (C6) {};
	\node[vertice] at (D1) {};
	\node[below right] at (D1) {$g=30$};
	\node[vertice] at (D2) {};
	\node[below left] at (D2) {$g=30$};
	\end{tikzpicture}
	\caption{Dual graph of the embedded resolution of $f$, and 
	homotopycal type of the dual graph of its semistable 
	reduction.}
	\label{fig:2branches67}
	\end{center}
	\end{figure}

	\item $g=((y^2+x^3)^2+x^5y)((x^2+y^3)^2+xy^5)$ defines a singular point 
	with Milnor number $\mu=63$,
	$$\Delta(t)=\frac{(t-1)(t^{20}-1)^2(t^{42}-1)^2}{(t^{10}-1)^2(t^{21}-1)^2},
	\hbox{ and } \Delta_2(t)=\frac{t^4-1}{t-1}.$$
	See \Cref{fig:2branches2pares}.

	\begin{figure}[ht]
	\begin{center}
	\begin{tikzpicture}[vertice/.style={draw,circle,fill,minimum 
	size=0.2cm,inner sep=0}]
	\coordinate (U) at (0,0);
	\coordinate (A1) at (1,0);
	\coordinate (A2) at (2,0);
	\coordinate (A3) at (3,0);
	\coordinate (B1) at (-1,0);
	\coordinate (B2) at (-2,0);
	\coordinate (B3) at (-3,0);
	\coordinate (A4) at (1,1);
	\coordinate (B4) at (-1,1);
	\coordinate (F1) at (2,0.75);
	\coordinate (F2) at (-2,0.75);

	\node[vertice] at (U) {};
	\node[below] at (U) {$8$};
	\node[vertice] at (A1) {};
	\node[below] at (A1) {$20$};
	\node[vertice] at (B1) {};
	\node[below] at (B1) {$20$};
	\node[vertice] at (A2) {};
	\node[below] at (A2) {$42$};
	\node[vertice] at (B2) {};
	\node[below] at (B2) {$42$};
	\node[vertice] at (A3) {};
	\node[below] at (A3) {$21$};
	\node[vertice] at (B3) {};
	\node[below] at (B3) {$21$};
	\node[vertice] at (A4) {};
	\node[above] at (A4) {$10$};
	\node[vertice] at (B4) {};
	\node[above] at (B4) {$10$};

	\draw (U)--(A1)--(A2)--(A3);
	\draw (U)--(B1)--(B2)--(B3);
	\draw (A1)--(A4);
	\draw (B1)--(B4);
	\draw[->] (A2)--(F1);
	\draw[->] (B2)--(F2);

	\coordinate (C1) at (0,-1);
	\coordinate (C2) at (0,-2);
	\coordinate (C3) at (0,-3);
	\coordinate (C4) at (0,-4);
	\coordinate (D1) at (1,-1.5);
	\coordinate (D2) at (-1,-1.5);
	\coordinate (D3) at (1,-3.5);
	\coordinate (D4) at (-1,-3.5);
	\coordinate (E1) at (2,-2.5);
	\coordinate (E2) at (-2,-2.5);
	
	\draw (E1)--(D1)--(C1);
	\draw (E2)--(D2)--(C1);
	\draw (D1)--(C2);
	\draw (D2)--(C2);
	
	\draw (E2)--(D4)--(C4);
	\draw (E1)--(D3)--(C4);
	\draw (D3)--(C3);
	\draw (D4)--(C3);
	
	\node[vertice] at (C1) {};
	\node[vertice] at (C2) {};
	\node[vertice] at (C3) {};
	\node[vertice] at (C4) {};
	\node[vertice] at (D1) {};
	\node[vertice] at (D2) {};
	\node[vertice] at (D3) {};
	\node[vertice] at (D4) {};
	\node[vertice] at (E1) {};
	\node[vertice] at (E2) {};
	\node[right] at (D1) {$g=2$};
	\node[left] at (D2) {$g=2$};
	\node[right] at (D3) {$g=2$};
	\node[left] at (D4) {$g=2$};
	\node[right] at (E1) {$g=10$};
	\node[left] at (E2) {$g=10$};

	\end{tikzpicture}
	\caption{Dual graph of the embedded resolution of $g$, and 
	homotopycal type of the dual graph of its semistable 
	reduction.}
	\label{fig:2branches2pares}
	\end{center}
	\end{figure}

	\item $h=(x+y)(x-y)(x^2+y^3)(y^2+x^3)$ defines a singular point with 
	Milnor number $\mu=27$,
	$$\Delta(t)=\frac{(t-1)(t^{14}-1)^2(t^6-1)^2}{(t^7-1)^2}, \hbox{ and } 
	\Delta_2(t)=(t-1)^2.$$
	See \Cref{fig:4branches}.

	\begin{figure}[ht]
	\begin{center}
	\begin{tikzpicture}[vertice/.style={draw,circle,fill,minimum 
	size=0.2cm,inner sep=0}]
	\coordinate (U) at (0,0);
	\coordinate (A1) at (1,0);
	\coordinate (A2) at (2,0);
	\coordinate (B1) at (-1,0);
	\coordinate (B2) at (-2,0);
	\coordinate (F1) at (1,0.75);
	\coordinate (F2) at (-1,0.75);
	\coordinate (F3) at (0.5,0.75);
	\coordinate (F4) at (-0.5,0.75);

	\node[vertice] at (U) {};
	\node[below] at (U) {$6$};
	\node[vertice] at (A1) {};
	\node[below] at (A1) {$14$};
	\node[vertice] at (B1) {};
	\node[below] at (B1) {$14$};
	\node[vertice] at (A2) {};
	\node[below] at (A2) {$6$};
	\node[vertice] at (B2) {};
	\node[below] at (B2) {$6$};
	
	\draw (U)--(A1)--(A2);
	\draw (U)--(B1)--(B2);
	\draw[->] (A1)--(F1);
	\draw[->] (B1)--(F2);
	\draw[->] (U)--(F3);
	\draw[->] (U)--(F4);

	\coordinate (Q) at (0,-1.5);
	\coordinate (C1) at (-1,-1.5);
	\coordinate (C2) at (1,-1.5);
	\coordinate (C3) at (0,-2);
	
	\node[vertice] at (Q) {};
	\node[below] at (C3) {$g=4$};
	\node[vertice] at (C1) {};
	\node[left] at (C1) {$g=3$};
	\node[vertice] at (C2) {};
	\node[right] at (C2) {$g=3$};
	
	\draw (-1,-1.5) arc (180:0:0.5);
	\draw (0,-1.5) arc (0:-180:0.5);
	\draw (0,-1.5) arc (180:0:0.5);
	\draw (1,-1.5) arc (0:-180:0.5);

	\end{tikzpicture}
	\caption{Dual graph of the embedded resolution of $h$, and 
	homotopycal type of the dual graph of its semistable 
	reduction.}
	\label{fig:4branches}
	\end{center}
	\end{figure}
	\end{enumerate}
	\end{example}
%\end{comment}
	
%	\cite{MatMont, Hertling05, Kaen, Weber}

	\bibliographystyle{amsplain}
	\bibliography{bibliography}
	
\end{document}